\numberwithin{equation}{section}
\newtheorem{lem}[equation]{Lemma}
\newtheorem{prop}[equation]{Proposition}
\newtheorem{thm}[equation]{Theorem}
\newtheorem{Example}[equation]{Example}
\newtheorem{remark}[equation]{Remark}
\newenvironment{rmk}{\begin{remark}\rm}{\end{remark}}
\def\co{\colon\thinspace}
\def\a{\alpha}
\def\g{\gamma}
\def\d{\partial}
\def\s{\sigma}
\def\l{\lambda}
\def\S1{\bf S^1}
\def\equalsfill{$\m@th\mathord=\mkern-7mu
\cleaders\hbox{$\!\mathord=\!$}\hfill
\mkern-7mu\mathord=$}
\begin{document}

\abovedisplayskip=6pt plus3pt minus3pt
\belowdisplayskip=6pt plus3pt minus3pt

\title[Complete nonnegatively curved planes]
{\bf Connectedness properties 
of the space of complete nonnegatively curved planes}

\thanks{\it 2000 Mathematics Subject classification.\rm\ 
Primary 53C21, Secondary 57N20, 31A05.
\it\ Keywords:\rm\ nonnegative curvature, complete metric, subharmonic function, 
space of metrics, moduli space, infinite dimensional manifold.}\rm

\author{Igor Belegradek}

\address{Igor Belegradek\\School of Mathematics\\ Georgia Institute of
Technology\\ Atlanta, GA 30332-0160}\email{ib@math.gatech.edu}

\author{Jing Hu}

\address{Jing Hu\\School of Mathematics\\ Georgia Institute of
Technology\\ Atlanta, GA 30332-0160}\email{jhu61@math.gatech.edu}


\date{}
\begin{abstract}
We study the space of complete Riemannian metrics of nonnegative curvature
on the plane equipped with the $C^k$ topology.
If $k$ is infinite, we show that the space is homeomorphic to the 
separable Hilbert space. For any $k$ we prove that the space
cannot be made disconnected by removing a finite dimensional subset. 
A similar result holds for the associated moduli space. The proof combines 
properties of subharmonic functions with results
of infinite dimensional topology and dimension theory.
A key step is a characterization of the conformal factors that make
the standard Euclidean metric on the plane
into a complete metric of nonnegative sectional curvature.  
\end{abstract}
\maketitle

\section{Introduction}

Spaces of constant curvature metrics on surfaces 
is the subject of Teichm\"uller theory. 
The spaces of Riemannian metrics (and the associated moduli spaces) 
have been studied under various geometric assumptions such as
positive scalar \cite{KS-mod, Ros-surv, Mar-3sph-ric-fl}, 
negative sectional \cite{FarOnt-ann09, FarOnt-jtop10, FarOnt-jdg10, FarOnt-gafa10}, 
positive Ricci \cite{Wra-ric-mod}, nonnegative 
sectional \cite{KPT-mod, BKS-mod1, BKS-mod2, Ott-non-homeo},
while curvature-free results about spaces of metrics can be found 
in \cite{Ebi, NabWei-ihes}.

Throughout the paper the topology of 
$C^k$ uniform convergence on compact sets, where $0\le k\le \infty$,
will be called {\it the $C^k$ topology}.

This paper studies connectedness properties of the 
set of complete nonnegatively curved metrics
on $\mathbb R^2$ equipped with the $C^k$
topology. The main theme is deciding 
when two metrics can be deformed to each other 
through complete nonnegatively curved metrics 
outside a given subset, and how large 
the space of such deformations is.

The starting point is a result of Blanc-Fiala~\cite{BlaFia} that
any complete nonnegatively curved metric on $\mathbb R^2$ is 
conformally equivalent to the standard Euclidean metric $g_0$, i.e.
isometric to $e^{-2u} g_0$ for some smooth function $u$,
see~\cite[Theorem 15]{Hub-1957} 
and~\cite[Corollary 7.4]{Gri-surv} for generalizations.

The sectional curvature of $e^{-2u} g_0$ equals $e^{2u}\Delta u$, 
where $\triangle$ is the Euclidean Laplacian.
Thus $e^{-2u} g_0$ has nonnegative curvature 
if and only if $u$ is subharmonic.
Characterizing subharmonic functions 
that correspond to complete metrics is not straightforward, and 
doing so is the main objective of this paper. 
A basic property~\cite[Theorem 2.14]{HayKen-vol1} 
of a subharmonic function $u$ on $\mathbb R^2$ is that the limit
\[
\a(u):=\displaystyle{\lim_{r\to \infty}\frac{M(r,u)}{\log r}}
\]
exists in $[0,\infty]$, where $M(r,u):=\sup\{u(z):|z|=r\}$. 
For example, 
by Liouville's theorem $\a(u)=0$ if and only if $u$ is constant, while
any nonconstant harmonic function $u$ satisfies
$\a(u)=\infty$ (see Proposition~\ref{prop: harmonic}). 

Completeness of $e^{-2u}g_0$ follows easily when $\a(u)<1$.
Appealing to more delicate properties of subharmonic functions due to
Huber~\cite{Hub-1957} and Hayman~\cite{Hay-slow} we prove:

\begin{thm}
\label{thm: char of completeness}
The metric $e^{-2u}g_0$ is complete 
if and only if $\a(u)\le 1$. 
\end{thm}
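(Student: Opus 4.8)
I would prove the two implications separately.

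For the "only if" direction, suppose $\a(u)>1$; I want to show $e^{-2u}g_0$ is incomplete, i.e.\ that some ray escaping to infinity has finite length $\int e^{-u}\,|dz|<\infty$. The length of the radial segment from $1$ to $R$ along the ray in direction $\th$ is $\int_1^R e^{-u(re^{i\th})}\,dr$. If $u$ were close to its circular maximum $M(r,u)$ along a fixed radius this would be bounded by $\int_1^R e^{-M(r,u)}\,dr$, which converges once $M(r,u)\ge(1+\e)\log r$ eventually. The subtlety, of course, is that the maximum may be attained at different angles for different $r$, so no single ray obviously works. This is exactly where I expect the real difficulty to lie, and where one must invoke Huber's and Hayman's finer results: one needs that for a subharmonic $u$ with $\a(u)>1$ there is an actual curve going to infinity along which $u$ grows at least like $(1+\e)\log|z|$ (a "tract" or Hayman-style asymptotic-path statement), so that integrating $e^{-u}$ along that curve gives finite length. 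I would look for a lemma of the form: if $\a(u)>1$ then there is an unbounded continuous path $\g$ with $u(\g(t))\ge (1+\e)\log|\g(t)|$ for large $t$ — this is the analytic heart of the theorem. Given such a path, $\int_\g e^{-u}\,|dz|<\infty$ (after checking the path can be taken with controlled length, e.g.\ piecewise radial-and-circular so that the circular pieces contribute boundedly), so the metric is incomplete.

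For the "if" direction, assume $\a(u)\le 1$; I want completeness, i.e.\ every divergent path has infinite $e^{-u}$-length. By the mean value property / subharmonicity, $u$ is bounded above on $|z|=r$ by roughly $M(r,u)$, and the issue now is an \emph{upper} bound on $u$ that holds at \emph{every} point of the circle, not just pointwise control of the max. Here I would use that when $\a(u)\le 1$ one has $M(r,u)\le (1+o(1))\log r$, and more usefully a uniform bound $u(z)\le \log|z| + C$ for $|z|$ large — this should follow from comparing $u$ with the harmonic majorant / using the Riesz representation of $u$ and estimating the logarithmic potential of its Riesz measure, whose total mass in $|z|\le r$ is controlled by $\a(u)\le 1$. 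Granting $u(z)\le \log|z|+C$, any path $\g(t)$ with $|\g(t)|\to\infty$ satisfies
\[
\int_\g e^{-u}\,|dz|\ \ge\ e^{-C}\int_\g \frac{|dz|}{|z|}\ \ge\ e^{-C}\!\int \frac{d|z|}{|z|}\ =\ \infty,
\]
using that $|dz|\ge d|z|$ (the radial component is a lower bound for arc length). Hence the metric is complete. The borderline case $\a(u)=1$ is the delicate one and is presumably exactly where Hayman's "slow" growth estimates from~\cite{Hay-slow} are needed to upgrade $M(r,u)\sim\log r$ to the pointwise majorant $u(z)\le\log|z|+C$; I expect that to be the main obstacle on this side. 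Combining the two directions gives the characterization.
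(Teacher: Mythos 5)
Your outline has the same architecture as the paper's proof and correctly isolates both analytic cruxes, so let me compare how each is actually discharged. For $1<\a(u)<\infty$ the paper uses precisely the Hayman-type statement you ask for, in a strong form: by Hayman's theorem on slowly growing subharmonic functions there are a constant $c$ and a null set $Z$ of directions such that $0\le M(r,u)-u(re^{i\theta})\le c$ for every $\theta\notin Z$ and all large $r$; thus a single radial ray already has finite $e^{-u}$-length, and your worry about assembling radial and circular arcs disappears. Note, however, that this theorem requires $\a(u)$ finite, so the case $\a(u)=\infty$ must be treated separately (the paper invokes Huber's construction of a finite-length escaping path). For $\a(u)\le 1$ your target majorant $u(z)\le\log|z|+C$ is exactly the right statement and is what the paper establishes, but the mechanism is different from your proposed Riesz-representation argument: the natural source of the bound is Hadamard's three-circles convexity, i.e.\ $\mu(t)=M(e^t,u)$ is convex in $t$, and a convex function with $\mu(t)/t\to\a(u)\le 1$ must have one-sided derivatives $\le 1$ everywhere, whence $M(r,u)\le \log r+O(1)$ with no $o(1)\cdot\log r$ error left over --- this is precisely what makes the borderline case $\a(u)=1$ tractable. (The paper packages this as a case analysis on the sign of $d^{\,\prime}$ for $d(r)=N(r)/\log r-1$, where $N$ is a smoothed radial majorant made subharmonic via Azagra's theorem, but convexity of $M$ in $\log r$ is what drives the argument.) Your Riesz-potential route is not obviously wrong but is more delicate, since you would also have to control the harmonic part of the representation, and you leave this step as a conjecture; once the majorant is granted, your final computation $\int_{\gamma} e^{-u}\,ds\ge e^{-C}\int_{\gamma}|z|^{-1}\,ds=\infty$ coincides with the paper's. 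In short: right strategy and correctly identified key lemmas, with the two deferred ingredients (Hayman's almost-every-ray estimate, and the $O(1)$ upgrade of the maximum bound via log-convexity) being exactly what the paper supplies.
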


Note that complete Riemannian
metrics on any manifold form a dense subset in the space of all Riemannian metrics, 
e.g. $e^{-2u}g_0$ is the endpoint of the
curve $g_s:=(s+e^{-2u})g_0$ where the metric $g_s$ is complete for $s>0$~\cite{FegMil}.

Denote the set of $C^\infty$ complete Riemannian metrics on $\mathbb R^2$
of nonnegative sectional curvature equipped with
the $C^k$ topology by $\mathcal R^{k}_{\ge 0}(\mathbb R^2)$.
The space $\mathcal R^{k}_{\ge 0}(\mathbb R^2)$ 
is metrizable and separable,
see Lemma~\ref{lem: separable}. We prove

\begin{thm}
\label{intro-thm: space of metrics homeo l2}
$\mathcal R^{\infty}_{\ge 0}(\mathbb R^2)$ is homeomorphic to $\ell^2$, the separable Hilbert space. 
\end{thm}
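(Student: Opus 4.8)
The plan is to recognize $\mathcal R^{\infty}_{\ge 0}(\mathbb R^2)$ as an infinite-dimensional subset of a separable Fréchet space that satisfies the hypotheses of one of the standard characterization theorems of $\ell^2$ from infinite-dimensional topology, namely the Anderson–Kadec/Toruńczyk type results: a separable metrizable space is homeomorphic to $\ell^2$ if and only if it is a completely metrizable, separable absolute retract that is an $\ell^2$-manifold, or, more usefully here, if and only if it is a convex subset of a Fréchet space that is homeomorphic to a closed convex set with a certain absorption property, or simply if it is a completely metrizable separable AR that satisfies the discrete approximation property. I would try to avoid the manifold machinery entirely by exploiting convexity: using Theorem~\ref{thm: char of completeness} together with the identification of nonnegative curvature with subharmonicity, the relevant set of conformal factors should turn out to be \emph{convex}, after which a theorem of Dobrowolski–Toruńczyk (a completely metrizable separable convex subset of a Fréchet space, which is infinite-dimensional, is homeomorphic to $\ell^2$) finishes the job.

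First I would invoke Blanc–Fiala: the map $u\mapsto e^{-2u}g_0$ is a homeomorphism from a set $\mathcal U$ of smooth functions (with the $C^\infty$ topology, normalized so $u(0)=0$, say, to kill the scaling ambiguity) onto $\mathcal R^{\infty}_{\ge 0}(\mathbb R^2)$; continuity in both directions should follow from elliptic regularity applied to $e^{2u}\Delta u = \text{curv}$, plus the fact that $g\mapsto$ (its conformal factor relative to $g_0$) is continuous. Second, by the curvature formula $\text{curv}(e^{-2u}g_0)=e^{2u}\Delta u \ge 0 \iff u$ subharmonic, and by Theorem~\ref{thm: char of completeness}, $\mathcal U = \{u \in C^\infty(\mathbb R^2) : u(0)=0,\ \Delta u \ge 0,\ \alpha(u)\le 1\}$. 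Third — and this is the point I expect to be the main obstacle — I must show $\mathcal U$ is convex and closed in the Fréchet space $C^\infty(\mathbb R^2)_0 := \{u \in C^\infty : u(0)=0\}$. Convexity is clear for the conditions $u(0)=0$ and $\Delta u \ge 0$, and the subadditivity $\alpha(tu+(1-t)v)\le t\,\alpha(u)+(1-t)\,\alpha(v)$ follows from $M(r,tu+(1-t)v)\le tM(r,u)+(1-t)M(r,v)$ and the definition of $\alpha$ as a limit; so $\alpha(u)\le 1$ and $\alpha(v)\le 1$ give $\alpha(tu+(1-t)v)\le 1$. The subtle part is \emph{closedness}: if $u_j\to u$ in $C^\infty$ and $\alpha(u_j)\le 1$, one needs $\alpha(u)\le 1$, and since $\alpha$ is not continuous under $C^\infty$-convergence (growth at infinity is not controlled by local convergence), I would instead argue that the \emph{range} $\mathcal R^{\infty}_{\ge 0}(\mathbb R^2)$ is closed in the space of all complete $C^\infty$ metrics — but it isn't closed in the space of \emph{all} metrics by the Fegan–Millson remark, so completeness must be built in by a different route: show that $\mathcal R^{\infty}_{\ge 0}(\mathbb R^2)$, though not closed in the full Fréchet space of metrics, is a \emph{completely metrizable} (i.e. $G_\delta$, or Polish) subspace in its own right, e.g. by exhibiting an explicit complete metric inducing the $C^\infty$ topology and showing Cauchy sequences converge within the space using the curvature and completeness constraints together with a diagonal/limit argument for $\alpha$.

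Fourth, I would verify $\mathcal U$ is infinite-dimensional, which is immediate: it contains an infinite linearly independent family of compactly-supported-Laplacian perturbations of, say, $u\equiv 0$ with small enough amplitude to keep $\alpha<1$. Then, granting the completely-metrizable-separable-convex structure, the Dobrowolski–Toruńczyk characterization gives $\mathcal U\cong\ell^2$, hence $\mathcal R^{\infty}_{\ge 0}(\mathbb R^2)\cong\ell^2$. The delicate step, as flagged, is establishing complete metrizability: the natural topology is a subspace topology inside a Polish space (smooth metrics with the $C^\infty$ topology is Polish), so it suffices to prove $\mathcal R^{\infty}_{\ge 0}(\mathbb R^2)$ is $G_\delta$ there; nonnegative curvature is a closed condition, so everything reduces to showing that ``complete'' is $G_\delta$ within the closed set of nonnegatively curved smooth metrics conformal to $g_0$, which via Theorem~\ref{thm: char of completeness} becomes: $\{\alpha(u)\le 1\}$ is $G_\delta$ in $\{u : u(0)=0,\ \Delta u\ge 0\}$. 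I would write $\alpha(u)\le 1$ as $\bigcap_{n}\{u : \limsup_r M(r,u)/\log r < 1 + 1/n\}$ is not literally right since $\alpha$ can equal $1$, but using $\alpha(u) = \lim_r M(r,u)/\log r$ and the monotone/convexity behavior of $r\mapsto M(r,u)$ (indeed $M(e^t,u)$ is convex in $t$ for subharmonic $u$), one gets $\alpha(u)=\sup_{r_0}\inf_{r\ge r_0}\frac{M(r,u)}{\log r}$ with each inner quantity upper semicontinuous in $u$, making $\{\alpha\le 1\}$ a countable intersection of open sets after a standard rewriting — that countable Borel bookkeeping is the one genuinely technical computation I would need to carry out carefully.
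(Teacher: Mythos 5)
The central step of your argument fails at the very first reduction. Blanc--Fiala says that every complete nonnegatively curved metric on $\mathbb R^2$ is \emph{isometric} to some $e^{-2u}g_0$, i.e. of the form $\varphi^*e^{-2u}g_0$ for a diffeomorphism $\varphi$; it does \emph{not} say the metric is pointwise a conformal multiple of $g_0$ in the given coordinates. Your map $u\mapsto e^{-2u}g_0$ therefore hits only the proper subset of metrics whose off-diagonal Beltrami coefficient vanishes, and is nowhere near surjective onto $\mathcal R^{\infty}_{\ge 0}(\mathbb R^2)$. (Your normalization $u(0)=0$ makes matters worse: $e^{-2(u+c)}g_0$ and $e^{-2u}g_0$ are distinct tensors, so normalizing excludes even some pointwise-conformal metrics; there is no scaling ambiguity to kill.) The paper's parametrization is by \emph{pairs}: $\Pi(u,\varphi)=\varphi^*e^{-2u}g_0$ with $u\in S_1$ and $\varphi\in\mathrm{Diff}^+_{0,1}(\mathbb R^2)$, and the real work is showing that $\Pi^{-1}$ is continuous, which is done by reading off the Beltrami dilatation $\mu$ from the metric, observing that $\varphi$ solves $\varphi_{\bar z}=\mu\varphi_z$, and invoking the Earle--Schatz continuity theorem for solutions of the Beltrami equation (Theorem~\ref{thm: pi k homeo}). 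One then needs Yagasaki's theorem that $\mathrm{Diff}^+_{0,1}(\mathbb R^2)\cong\ell^2$ and the fact that $\ell^2\times\ell^2\cong\ell^2$. None of this machinery appears in your proposal, and without it the theorem is out of reach.

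Two secondary points. First, your worry that $\{\alpha(u)\le 1\}$ might not be closed, and the proposed $G_\delta$ detour, are unnecessary: $S_1$ \emph{is} closed in $C^\infty(\mathbb R^2)$. The mechanism is the one you half-guess --- convexity of $t\mapsto M(e^t,u)$ for subharmonic $u$ forces the one-sided derivatives to be bounded by $\alpha(u)$, hence gives a genuine linear upper bound $\mu(t)\le \alpha(t-t_0)+\mu(t_0)$ that survives pointwise limits (Lemma~\ref{lem: S_alpha closed}). Second, the version of Dobrowolski--Toru\'nczyk you quote is wrong as stated: an infinite-dimensional completely metrizable separable convex set need not be $\ell^2$ (the Hilbert cube is compact, convex, infinite-dimensional). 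The correct hypothesis is that the closed convex subset of a separable Fr\'echet space be \emph{not locally compact}, which the paper verifies for $S_\alpha$, $\alpha\neq 0$, in Lemma~\ref{lem: S_alpha}. Both of these are repairable; the missing diffeomorphism factor is not.
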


Any metric conformally equivalent to the standard metric on
$\mathbb R^2$ can be written {\it uniquely\,} as $\varphi^*e^{-2u} g_0$
where $g_0$ is the standard Euclidean metric, $u$ is a smooth function, 
and $\varphi\in\mathrm{Diff}^+_{0,1}(\mathbb R^2)$,
the group of self-diffeomorphisms of the plane fixing the complex numbers 
$0$, $1$ and isotopic to the identity, see Lemma~\ref{lem: conformal unique}. 

Let $S_\a$ be the subset of $C^\infty(\mathbb R^2)$ consisting of
subharmonic functions with $\a(u)\le \a$. 
By Theorem~\ref{thm: char of completeness}
the map $(u, \varphi)\to \varphi^*e^{-2u}g_0$ defines
a bijection
\begin{equation}
\label{form: bijection}
\Pi\co S_1\times\mathrm{Diff}^+_{0,1}(\mathbb R^2)\to 
\mathcal R^k_{\ge 0}(\mathbb R^2).
\end{equation} 

We will show in Theorem~\ref{thm: pi k homeo} that 
$\Pi$ becomes a homeomorphism if $S_1$ and $\mathcal R^k_{\ge 0}(\mathbb R^2)$ are 
given the $C^{k+\g}$ topology and $\mathrm{Diff}^+_{0,1}(\mathbb R^2)$ is given the $C^{k+1+\g}$ topology,
where $\g\in (0,1)$.
The idea of the proof is to write $\varphi^*e^{-2u}g_0$ as $\l|dx+\mu d\bar z|$,
note that $\phi$ solves the Beltrami equations $\varphi_{\bar z}=\mu\varphi_{z}$,
and use the fact that solutions of the Beltrami equation depend smoothly on $\mu$.
For finite $k$ we do not yet know the homeomorphism type of 
$S_1$, $\mathrm{Diff}^+_{0,1}(\mathbb R^2)$ in the $C^{k+\g}$, $C^{k+1+\g}$ topologies,
respectively; we hope to address this in future work.

Unless stated otherwise we equip 
$S_\a$, $C^\infty(\mathbb R^2)$, and
$\mathrm{Diff}^+_{0,1}(\mathbb R^2)$ with the $C^\infty$ topology.
In this topology the bijection $\Pi$ is clearly continuous.
Theorem~\ref{thm: pi k homeo} stated above implies that $\Pi$ is a homeomorphism for $k=\infty$.
By contrast, if $k$ is finite, then $\Pi$ is not a homeomorphism,
because it factors as the composite of
$\Pi\co S_1\times\mathrm{Diff}^+_{0,1}(\mathbb R^2)\to \mathcal R^{\infty}_{\ge 0}(\mathbb R^2)$ and
$\mathbf{id}\co\mathcal R^{\infty}_{\ge 0}(\mathbb R^2)\to\mathcal R^{k}_{\ge 0}(\mathbb R^2)$ 
and the latter map is a continuous bijection that is clearly not a homeomorphism.

The $C^\infty$ topology makes $C^\infty(\mathbb R^2)$ into a separable Fr\'echet space, 
see Lemma~\ref{lem: separable}.   
Moreover, we show in Lemmas~\ref{lem: S_alpha closed}--\ref{lem: S_alpha} that the subset $S_\a$
of $C^\infty(\mathbb R^2)$ is closed, convex, and not locally compact when $\a\neq 0$.
Since $S_0$ consists of constants, it is homeomorphic to $\mathbb R$.

What makes the continuous bijection $\Pi$ useful is the fact that 
the parameter space $S_1\times\mathrm{Diff}^+_{0,1}(\mathbb R^2)$ is homeomorphic 
to $\ell^2$, which is implied by the following
results of infinite dimensional topology:
\begin{itemize}
\item
any closed convex non-locally-compact subset of a separable Fr\'echet space is 
homeomorphic to $\ell^2$~\cite[Theorem 2]{DobTor}; e.g.
this applies to $S_\a\subset C^\infty(\mathbb R^2)$ with $\a\neq 0$.
\item
$\mathrm{Diff}^+_{0,1}(\mathbb R^2)$ is homeomorphic to $\ell^2$~\cite[Theorem 1.1]{Yag}.
\item
$\ell^2$ is homeomorphic to $(-1,1)^{\mathbb N}$, the countably infinite product of 
open intervals~\cite{And-l2-lines}, so $\ell^2\times \ell^2$ is homeomorphic to $\ell^2$.
\end{itemize}
In particular, the above discussion yields Theorem~\ref{intro-thm: space of metrics homeo l2}.

Our first application demonstrates that any two metrics can be 
deformed to each other in a variety of ways, while bypassing a given countable set:

\begin{thm} 
\label{thm: space of metrics countable}
If $K$ is a countable subset of $\mathcal R^{k}_{\ge 0}(\mathbb R^2)$
and $X$ is a separable metrizable space, then for 
any distinct points $x_1, x_2\in X$ and any distinct metrics $g_1, g_2$ 
in $\mathcal R^{k}_{\ge 0}(\mathbb R^2)\smallsetminus K$
there is an embedding of $X$ into $\mathcal R^{k}_{\ge 0}(\mathbb R^2)\smallsetminus K$
that takes $x_1$, $x_2$ to $g_1$, $g_2$, respectively.
\end{thm}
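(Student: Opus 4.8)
The plan is to transport the problem, via the homeomorphism $\Pi$ of Theorem~\ref{thm: pi k homeo}, into the parameter space $\mathcal P:=S_1\times\mathrm{Diff}^+_{0,1}(\mathbb R^2)$ (with $S_1$ in the $C^k$ topology and $\mathrm{Diff}^+_{0,1}(\mathbb R^2)$ in the $C^{k+1}$ topology, so that $\Pi\co\mathcal P\to\mathcal R^{k}_{\ge 0}(\mathbb R^2)$ is a homeomorphism), find there a single tame copy of $\ell^2$ through the two target points, and then quote the universality and homogeneity of $\ell^2$. Put $C:=\Pi^{-1}(K)$, a countable set, and $p_i:=\Pi^{-1}(g_i)=(u_i,\varphi_i)$; by Lemma~\ref{lem: conformal unique} these are well defined, $p_1\neq p_2$ (as $g_1\neq g_2$), and $p_1,p_2\in\mathcal P\smallsetminus C$ (as $g_1,g_2\notin K$). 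It suffices to embed $X$ into $\mathcal P\smallsetminus C$ carrying $x_i$ to $p_i$, for then $\Pi$ composed with that embedding does the job, since $\Pi(Q\smallsetminus C)=\Pi(Q)\smallsetminus K\subseteq\mathcal R^{k}_{\ge 0}(\mathbb R^2)\smallsetminus K$ for any $Q$.

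Since $\mathrm{Diff}^+_{0,1}(\mathbb R^2)$ is path-connected (its elements are isotopic to the identity) and metrizable, it is arcwise connected; pick an arc $\gamma\co[0,1]\hookrightarrow\mathrm{Diff}^+_{0,1}(\mathbb R^2)$ with $\gamma(0)=\varphi_1$ and $\gamma(1)=\varphi_2$, and set $Q:=S_1\times\gamma([0,1])\subseteq\mathcal P$. As $\gamma$ is an embedding, $Q$ with its subspace topology is homeomorphic to $S_1\times[0,1]$, regarded as a subset of the separable Fréchet space $C^k(\mathbb R^2)\times\mathbb R$ (separability and completeness checked as in Lemma~\ref{lem: separable}). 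In the $C^k$ topology $S_1$ is still closed (the proof of Lemma~\ref{lem: S_alpha closed} uses only convergence of $u_n$ on circles, which $C^k$-convergence implies), convex (a topology-free statement), and not locally compact (by Lemma~\ref{lem: S_alpha}, $S_1$ lies in no finite-dimensional affine subspace, and a locally compact convex subset of a Hausdorff topological vector space is finite-dimensional); hence so is $S_1\times[0,1]$ in $C^k(\mathbb R^2)\times\mathbb R$, and therefore $Q\cong S_1\times[0,1]\cong\ell^2$ by~\cite[Theorem~2]{DobTor}. Note $p_1=(u_1,\gamma(0))\in Q$ and $p_2=(u_2,\gamma(1))\in Q$.

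Now $C\cap Q$ is a countable subset of $Q\cong\ell^2$, and a countable subset of $\ell^2$ is negligible, so $Q\smallsetminus C\cong\ell^2$; moreover $p_1,p_2\in Q\smallsetminus C$. As every separable metrizable space embeds in $\ell^2$, choose an embedding $\iota_0\co X\to Q\smallsetminus C$; since $\ell^2$ is (affinely) $2$-point homogeneous there is a self-homeomorphism $h$ of $Q\smallsetminus C$ with $h(\iota_0(x_i))=p_i$ — legitimate since $\iota_0(x_1)\neq\iota_0(x_2)$ and $p_1\neq p_2$ — and $j:=h\circ\iota_0\co X\to Q\smallsetminus C\subseteq\mathcal P\smallsetminus C$ is an embedding with $j(x_i)=p_i$. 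Then $\Pi\circ j$ embeds $X$ into $\mathcal R^{k}_{\ge 0}(\mathbb R^2)\smallsetminus K$ and takes $x_i$ to $g_i$, as required.

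The crux is the identification $Q\cong\ell^2$, that is, knowing that $S_1$ retains the three properties needed for~\cite{DobTor} in the finite-order topology $C^k$ rather than $C^\infty$; the remaining ingredients — arcwise connectedness of $\mathrm{Diff}^+_{0,1}(\mathbb R^2)$, negligibility of countable subsets of $\ell^2$, and affine homogeneity of $\ell^2$ — are standard facts of infinite-dimensional topology. It is worth stressing that the argument never uses, and does not need, the homeomorphism type of $\mathcal R^{k}_{\ge 0}(\mathbb R^2)$ or of $\mathrm{Diff}^+_{0,1}(\mathbb R^2)$ in the $C^{k+1}$ topology (both unknown for finite $k$): only the well-understood factor $S_1$ is used in bulk, while the diffeomorphism factor enters solely through the single arc $\gamma$ used to join $g_1$ to $g_2$.
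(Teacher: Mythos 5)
There is a genuine gap, and it sits exactly at the step you yourself call the crux: the claim that $S_1$, equipped with the $C^k$ topology for \emph{finite} $k$, is a closed convex non-locally-compact subset of the separable Fr\'echet space $C^k(\mathbb R^2)$, so that \cite[Theorem 2]{DobTor} gives $S_1\times[0,1]\cong\ell^2$. Closedness fails: $S_1$ consists of \emph{smooth} subharmonic functions with $\a(u)\le 1$, and a $C^k$-limit of such functions is merely a $C^k$ subharmonic function with $\a\le 1$, not necessarily smooth. Concretely, let $w$ be the Newtonian potential of a nonnegative compactly supported function $f$ of class $C^{k-2}$ but not $C^\infty$, normalized so that $\frac{1}{2\pi}\int f\le 1$; then $w$ is subharmonic, of class $C^k$ but not $C^\infty$, has $\a(w)\le 1$, and its mollifications are smooth subharmonic functions with $\a\le1$ converging to $w$ in the $C^k$ topology on compact sets. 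So the closure of $S_1$ in $C^k(\mathbb R^2)$ is strictly larger than $S_1$, the Dobrowolski--Toru\'nczyk theorem does not apply, and the identification $Q\cong\ell^2$ is unjustified (indeed $S_1$ in the $C^k$ topology is probably not even completely metrizable, and the paper explicitly records that its homeomorphism type for finite $k$ is unknown). A secondary flaw: your argument for non-local-compactness, ``a locally compact convex subset of a Hausdorff topological vector space is finite-dimensional,'' is false as stated --- the Hilbert cube $\{x\in\ell^2: |x_n|\le 1/n\}$ is a compact convex infinite-dimensional subset; this particular point could be repaired along the lines of Lemma~\ref{lem: S_alpha}, but the closedness problem cannot.

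The failure is forced by your decision to route the proof through Theorem~\ref{thm: pi k homeo}, which obliges you to put the finite-order $C^k$ topology on the factor $S_1$. The intended argument instead keeps the $C^\infty$ topology on the parameter space $S_1\times\mathrm{Diff}^+_{0,1}(\mathbb R^2)$ (where Lemmas~\ref{lem: S_alpha closed}--\ref{lem: S_alpha}, \cite{DobTor} and \cite{Yag} do apply, giving $\ell^2$), and uses only that $\Pi$ is a continuous \emph{bijection} onto $\mathcal R^{k}_{\ge 0}(\mathbb R^2)$: after deleting the countable set $\Pi^{-1}(K)$ (a countable union of compact sets) the complement is still $\ell^2$, one embeds $X$ into a Hilbert cube there with $x_1,x_2$ sent to $\Pi^{-1}(g_1),\Pi^{-1}(g_2)$, and then the restriction of $\Pi$ to that compact set is automatically a homeomorphism onto its image because a continuous injection from a compact space to a Hausdorff space is an embedding. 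Your overall architecture (delete the countable set, embed $X$ in $\ell^2$, use two-point homogeneity) matches this, and your argument is fine in the case $k=\infty$; but for finite $k$ you must either supply a genuinely new proof that $S_1$ with the $C^k$ topology is homeomorphic to $\ell^2$ (an open problem in the paper) or abandon the homeomorphism $\Pi$ for finite $k$ and use the compactness trick above.
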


Some deformations in $\mathcal R^{k}_{\ge 0}(\mathbb R^2)$ can be constructed explicitly
(e.g. one could slightly change the metric near a point where $K>0$, or one could
join two embedded convex surfaces in $\mathbb R^3$ by the path of their convex combination).
Yet it is unclear how such methods could yield Theorem~\ref{thm: space of metrics countable}.
Instead we use infinite dimensional topology. The above theorem 
is an easy consequence of the following facts:
\begin{itemize}
\item
Like any continuous one-to-one map to a Hausdorff space, the map $\Pi$ 
restricts to a homeomorphism on every compact subset, e.g.
the Hilbert cube.
\item
Every separable metrizable space embeds into the Hilbert 
cube~\cite[Theorem IX.9.2]{Dug}. 
\item
The complement in $\ell^2$ of the countable union of compact
sets is homeomorphic to $\ell^2$~\cite{And-defic}, 
cf.~\cite[Theorem V.6.4]{BP-book}),
and hence contains an embedded Hilbert cube.
\end{itemize} 

A topological space is {\it continuum-connected\,} if every two points
lie in a {\it continuum\,} (a compact connected space); thus 
a continuum-connected space is connected but not necessarily path-connected.

By {\it dimension\,} we mean the covering dimension, 
see~\cite{Eng-book} for background. Note that for separable metrizable
spaces the covering dimension equals the small and the large inductive dimensions, 
see~\cite[Theorem 1.1.7]{Eng-book}.
A separable metric space is finite dimensional 
if and only if it embeds into a Euclidean space~\cite[Theorems 1.1.2 and 1.11.4]{Eng-book}. 

By Theorem~\ref{thm: space of metrics countable} any two metrics in 
$\mathcal R^{k}_{\ge 0}(\mathbb R^2)$
lie in an embedded copy of $\mathbb R^n$ for any $n$. 
The fact that $\mathbb R^n$ cannot be separated by a subset of codimension $\ge 2$
easily implies the following.

\begin{thm} \label{thm: space of metrics finite dim}
The complement of every finite dimensional subset of $\mathcal R^{k}_{\ge 0}(\mathbb R^2)$
is continuum-connected. 
The complement of every closed finite dimensional subset of $\mathcal R^{k}_{\ge 0}(\mathbb R^2)$
is path-connected.
\end{thm}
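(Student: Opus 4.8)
The plan is to leverage Theorem~\ref{thm: space of metrics countable} together with the standard fact that a Euclidean space cannot be disconnected by a subset of codimension at least two, and its refinement that a closed such subset does not separate any pair of points. Let $Z$ be a finite dimensional subset of $\mathcal R^{k}_{\ge 0}(\mathbb R^2)$, say $\dim Z=n$, and let $g_1,g_2$ be two metrics in the complement. We must produce a continuum in $\mathcal R^{k}_{\ge 0}(\mathbb R^2)\smallsetminus Z$ containing $g_1$ and $g_2$.

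First I would apply Theorem~\ref{thm: space of metrics countable} with $X=\mathbb R^{n+2}$ (a separable metrizable space), $x_1,x_2$ two distinct points of $\mathbb R^{n+2}$, and $K=\varnothing$, obtaining an embedding $h\co\mathbb R^{n+2}\hookrightarrow\mathcal R^{k}_{\ge 0}(\mathbb R^2)$ with $h(x_i)=g_i$. Then $h^{-1}(Z\cap h(\mathbb R^{n+2}))$ is a subset of $\mathbb R^{n+2}$ of dimension at most $n$, hence of codimension at least two. A subset of $\mathbb R^{m}$ of dimension $\le m-2$ cannot separate $\mathbb R^m$: indeed by a classical theorem in dimension theory (see~\cite[Theorem 1.8.12 and Corollary 1.8.13]{Eng-book}) the complement of such a subset of $\mathbb R^m$ is connected, and being an open subset of $\mathbb R^m$ it is also path-connected. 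Thus there is a path in $\mathbb R^{n+2}\smallsetminus h^{-1}(Z)$ from $x_1$ to $x_2$; its image under $h$ is a path in $\mathcal R^{k}_{\ge 0}(\mathbb R^2)\smallsetminus Z$ from $g_1$ to $g_2$. Since $g_1,g_2$ were arbitrary, $\mathcal R^{k}_{\ge 0}(\mathbb R^2)\smallsetminus Z$ is path-connected, which in particular gives continuum-connectedness.

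For the first assertion — where $Z$ is finite dimensional but \emph{not} assumed closed — the subtlety is that $h^{-1}(Z)$ need not be closed in $\mathbb R^{n+2}$, so $\mathbb R^{n+2}\smallsetminus h^{-1}(Z)$ need not be open, and hence connectedness does not automatically upgrade to path-connectedness; this is exactly why the conclusion is only continuum-connectedness in that case. Here I would again use the dimension-theoretic fact that the complement in $\mathbb R^{m}$ of a subset of dimension $\le m-2$ is connected (no closedness needed for connectedness, by~\cite[Corollary 1.8.13]{Eng-book}). So $\mathbb R^{n+2}\smallsetminus h^{-1}(Z)$ is a connected separable metrizable space containing $x_1$ and $x_2$; by~\cite[Theorem IX.9.2]{Dug} it embeds into the Hilbert cube, and a connected space that embeds in a compact metrizable space is continuum-connected — more directly, embed the connected separable metrizable space $\mathbb R^{n+2}\smallsetminus h^{-1}(Z)$ into the Hilbert cube $Q$, let $Y$ be its closure in $Q$; then $Y$ is a continuum (closure of a connected set is connected, and $Y$ is compact), but it need not lie in the complement of $Z$. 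To fix this, instead observe that it suffices to find, for each pair $g_1,g_2$, a single continuum in the complement of $Z$ joining them: take the arc component — rather, apply the following. A connected, locally connected, separable metrizable space is continuum-connected; but $\mathbb R^{n+2}\smallsetminus h^{-1}(Z)$ need not be locally connected either. The cleanest route: a connected open subset of $\mathbb R^{n+2}$ minus lower-dimensional junk still works if instead of removing $Z$ we first thicken, but that breaks the embedding.

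I expect the main obstacle to be precisely this gap between connectedness and the existence of a joining continuum when $Z$ is not closed. The resolution I would pursue is to invoke the sharper separation statement that the complement of a subset of dimension $\le m-2$ in $\mathbb R^m$ is not merely connected but \emph{$1$-connected in the sense that any two points lie in a Peano continuum (a locally connected continuum) contained in it}: one builds a polygonal path from $x_1$ to $x_2$ in $\mathbb R^{n+2}$, then perturbs each segment off the low-dimensional set $h^{-1}(Z)$ using a general-position argument in $\mathbb R^{n+2}$ (a dimension-$(\le n)$ set is avoided by generic $1$-dimensional polyhedra since $1+n<n+2$), producing a piecewise-linear arc in $\mathbb R^{n+2}\smallsetminus h^{-1}(Z)$; its image under $h$ is the desired continuum, in fact a path. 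Thus continuum-connectedness follows in both cases, and path-connectedness follows whenever the path's domain can be taken to be a genuine interval — which the general-position PL argument delivers even without closedness of $Z$, so the closedness hypothesis in the second sentence of the theorem is only needed if one insists on deducing path-connectedness of the \emph{whole} complement as a topological space rather than merely the existence of paths between arbitrary points; I would check carefully whether the authors intend the latter, weaker reading, in which case closedness can likely be dropped, or whether there is a genuine reason (e.g. controlling the path as a continuous function on $[0,1]$ landing in a non-open set) that makes closedness essential.
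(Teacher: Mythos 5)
Your setup coincides with the paper's: both arguments use Theorem~\ref{thm: space of metrics countable} to embed a Euclidean space of dimension at least $\dim Z+2$ through $g_1,g_2$ and observe that the trace of $Z$ in it has dimension at most $\dim Z$, hence codimension at least $2$. Your treatment of the \emph{closed} case is correct and in fact somewhat more elementary than the paper's (which embeds $S^n$ and invokes Alexander duality together with the cohomological characterization of dimension): when $Z$ is closed, $h^{-1}(Z)$ is closed in $\mathbb R^{n+2}$, its complement is an open connected subset of $\mathbb R^{n+2}$ and therefore path-connected. For this you only need connectedness of the complement of a dimension $\le m-2$ subset of $\mathbb R^m$, which is available.

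The gap is in the first assertion, where $Z$ need not be closed, and it is twofold. First, ``the complement is connected'' is strictly weaker than ``any two points of the complement lie in a common continuum'': there exist connected separable metric spaces containing no nondegenerate subcontinuum (the Knaster--Kuratowski fan is one), so connectedness of $\mathbb R^{n+2}\smallsetminus h^{-1}(Z)$ by itself yields nothing about continua joining $x_1$ to $x_2$. What is needed---and what the paper cites as \cite[Theorem 1.8.19]{Eng-book}, Mazurkiewicz's separation theorem---is that a subset of $\mathbb R^m$ of dimension $\le m-2$, closed or not, does not cut $\mathbb R^m$ between any two points of its complement, i.e.\ those points lie in a continuum disjoint from the subset; with that citation the first assertion is immediate. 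Second, your proposed repair by general position is invalid: covering dimension of an arbitrary subset carries no transversality. For instance $\mathbb P\times\mathbb P\subset\mathbb R^2$ (both coordinates irrational) is $0$-dimensional yet meets \emph{every} segment not lying on a rational vertical or horizontal line, so the claim that a generic $1$-dimensional polyhedron avoids a dimension $\le n$ subset of $\mathbb R^{n+2}$ is false; the count $1+n<n+2$ only controls intersections for sets with additional structure ($\sigma$-compactness, Hausdorff-dimension bounds, etc.). Consequently your closing speculation that closedness could be dropped from the path-connectedness statement is unfounded: the theorem's dichotomy between continuum-connected and path-connected is precisely the dichotomy between Mazurkiewicz's theorem and the open-complement argument.
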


Let  $\mathcal M^{k}_{\ge 0}(\mathbb R^2)$ denote
{\it the moduli space of complete nonnegatively curved metrics},
i.e. the quotient space of $\mathcal R^{k}_{\ge 0}(\mathbb R^2)$
by the $\mathrm{Diff}(\mathbb R^2)$-action via pullback.
The moduli space $\mathcal M^{k}_{\ge 0}(\mathbb R^2)$ is rather pathological, e.g. 
it is not a Hausdorff space (in Proposition~\ref{prop: cone, not homeo}
we exhibit a non-flat metric $g\in \mathcal R^{k}_{\ge 0}(\mathbb R^2)$ 
whose isometry class lies in every neighborhood of
the isometry class of $g_0$). Consider
the map $S_1\to \mathcal M^{k}_{\ge 0}(\mathbb R^2)$
sending $u$ to the isometry class of $e^{-2u}g_0$. Its 
fibers lie in the orbits of a $\mathrm{Conf}(g_0)$-action of $C^\infty(\mathbb R^2)$,
so each fiber is the union of countably many finite dimensional
compact sets, which by dimension theory arguments easily implies:

\begin{thm}\label{thm: mod disconnected}
The complement of a subset $S$ of $\mathcal M^{k}_{\ge 0}(\mathbb R^2)$
is path-connected if $S$ is countable, or if $S$
is closed, metrizable and finite dimensional.
\end{thm}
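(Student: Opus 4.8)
The plan is to reduce the statement about $\mathcal M^{k}_{\ge 0}(\mathbb R^2)$ to a statement about $S_1$ and then invoke the already-established facts that $S_1$ is homeomorphic to $\ell^2$ together with the dimension-theoretic tools quoted in the introduction. First I would recall the surjection $q\co S_1\to \mathcal M^{k}_{\ge 0}(\mathbb R^2)$ sending $u$ to the isometry class of $e^{-2u}g_0$; surjectivity is exactly the Blanc--Fiala theorem combined with Theorem~\ref{thm: char of completeness}. The key structural input, to be proved in the cited lemmas on the $\mathrm{Conf}(g_0)$-action, is that each fiber $q^{-1}(q(u))$ is contained in a single orbit of the conformal group $\mathrm{Conf}(g_0)$ acting on $C^\infty(\mathbb R^2)$, and that such an orbit, intersected with $S_1$, is a countable union of compact finite-dimensional sets (indeed $\mathrm{Conf}(g_0)\cong \mathrm{PSL}(2,\mathbb C)$ is a finite-dimensional Lie group with countably many components after restricting to the relevant piece, and its orbit map is proper on compacta). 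Granting this, the fibers of $q$ are $\sigma$-compact and finite-dimensional.

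Next I would handle the two cases. Suppose $S\subset \mathcal M^{k}_{\ge 0}(\mathbb R^2)$ is countable. Then $q^{-1}(S)$ is a countable union of fibers, hence a countable union of finite-dimensional compact sets, i.e. a countable union of compacta in $S_1\cong\ell^2$. By Anderson's theorem \cite{And-defic} (cf.~\cite[Theorem V.6.4]{BP-book}) the complement $S_1\smallsetminus q^{-1}(S)$ is again homeomorphic to $\ell^2$, in particular path-connected. Since $q$ is continuous and surjective and $q(S_1\smallsetminus q^{-1}(S))=\mathcal M^{k}_{\ge 0}(\mathbb R^2)\smallsetminus S$, the latter is a continuous image of a path-connected space, hence path-connected.

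Now suppose $S$ is closed and finite-dimensional; write $d=\dim S$. Then $q^{-1}(S)$ is a countable union of sets of the form (fiber over a point of $S$), and I would argue that $q^{-1}(S)$ is itself a countable union of finite-dimensional closed subsets of $\ell^2$: cover $\mathcal M^{k}_{\ge 0}(\mathbb R^2)$ by countably many ``charts'' coming from compact pieces $C_i$ of $S_1$ on which $q$ restricts to a closed map onto its image (here one uses $\sigma$-compactness of $S_1\cong\ell^2$ together with properness of the $\mathrm{Conf}(g_0)$-orbit map), so that $q^{-1}(S)\cap C_i$ maps onto a closed subset of the finite-dimensional metrizable space $q(C_i)$ with fibers of bounded dimension; by the dimension inequality $\dim(q^{-1}(S)\cap C_i)\le \dim(q(C_i)\cap S)+\sup_{\text{fibers}}\dim \le d + d_0$ for a uniform $d_0$ bounding the fiber dimension, see \cite[\S1.12]{Eng-book}. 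Thus $q^{-1}(S)$ is a countable union of finite-dimensional closed subsets of $\ell^2$, hence itself a countable union of $\sigma$-compacta of bounded dimension; Anderson's theorem again gives that $\ell^2\smallsetminus q^{-1}(S)$ is homeomorphic to $\ell^2$, and projecting by $q$ yields path-connectedness of $\mathcal M^{k}_{\ge 0}(\mathbb R^2)\smallsetminus S$.

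The main obstacle is the bookkeeping in the closed case: one must be careful that $q^{-1}(S)$ is not merely finite-dimensional but is a countable union of \emph{closed} finite-dimensional subsets of $S_1$, since $q$ need not be a closed map globally (the moduli space is not even Hausdorff). The remedy is to exhaust $S_1\cong\ell^2$ by compacta $K_i$, note that $q|_{K_i}$ is a closed map onto the compactum $q(K_i)$, apply the fiberwise dimension estimate on each $q(K_i)\cap S$, and take the union over $i$; each piece $q^{-1}(S)\cap K_i$ is then closed in $K_i$, finite-dimensional, and compact, so the countable union is $\sigma$-compact and Anderson's deficiency theorem applies. Everything else is a formal consequence of the three bulleted facts of infinite-dimensional topology already recorded in the introduction.
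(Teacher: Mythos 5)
Your treatment of the countable case is correct and essentially identical to the paper's: you use the same surjection $q\co S_1\to \mathcal M^{k}_{\ge 0}(\mathbb R^2)$, the same observation that each fiber lies in the image of the orbit map $\mathrm{Conf}(g_0)\to C^\infty(\mathbb R^2)$ of a finite-dimensional Lie group and is therefore $\sigma$-compact (your identification $\mathrm{Conf}(g_0)\cong\mathrm{PSL}(2,\mathbb C)$ is wrong --- the conformal automorphisms of the Euclidean plane are the affine maps $z\mapsto az+b$ and their conjugates, a $4$-dimensional group --- but this is immaterial), and the same appeal to \cite{And-defic}, \cite[Theorem V.6.4]{BP-book} to remove a countable union of compacta from $\ell^2$.

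The closed finite-dimensional case, however, has a genuine gap. Your plan hinges on exhausting $S_1\cong\ell^2$ by compacta $K_i$ and writing $q^{-1}(S)=\bigcup_i\bigl(q^{-1}(S)\cap K_i\bigr)$ as a countable union of compact finite-dimensional sets. But $\ell^2$ is not $\sigma$-compact (it is a non-locally-compact completely metrizable space, so by Baire category no countable family of compacta can cover it), so no such exhaustion exists and the decomposition never gets off the ground. There is also no reason for the closed set $q^{-1}(S)$ itself to be $\sigma$-compact or finite dimensional --- $q$ is not a closed map globally (the moduli space is not Hausdorff), so the dimension-lowering theorem \cite[Theorem 1.12.4]{Eng-book} cannot be applied to $q$ on all of $S_1$ --- and consequently Anderson's theorem on removing countable unions of \emph{compacta} is not available. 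The paper's remedy is to localize rather than globalize: given $g_1,g_2\notin S$, place their preimages $u_1,u_2$ in a single embedded Hilbert cube $\hat Q\subset S_1$ (via Theorem~\ref{thm: space of metrics countable}); then $\hat Q\cap q^{-1}(S)$ is compact because $S$ is closed and $\hat Q$ is compact, the restriction of $q$ to this compact set \emph{is} a closed map, so \cite[Theorem 1.12.4]{Eng-book} together with the uniform bound on fiber dimension gives that $\hat Q\cap q^{-1}(S)$ is finite dimensional; finally, the complement of a closed finite-dimensional subset of the Hilbert cube is acyclic, hence path-connected \cite[Lemma 2.1]{Kro}, which produces the required path from $u_1$ to $u_2$ inside $\hat Q\smallsetminus q^{-1}(S)$ and, after applying $q$, from $g_1$ to $g_2$ in the complement of $S$. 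Your fiber-dimension estimate is the right ingredient, but it must be deployed on a compact slice, not on all of $\ell^2$.
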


The above applications generalize in two different directions:

\begin{enumerate}
\item
The deformations in 
Theorems~\ref{thm: space of metrics countable}--\ref{thm: mod disconnected} 
can be arranged to bypass any given set of complete
flat metrics (see Remark~\ref{rmk: flat metrics}). 
\item
Replacing $S_1$ with $S_\infty$ in the
proofs of Theorems~\ref{thm: space of metrics countable}--\ref{thm: mod disconnected} 
one gets the same conclusions with
$\mathcal R^{k}_{\ge 0}(\mathbb R^2)$ 
substituted by the subspace of $\mathcal R^{k}(\mathbb R^2)$
of nonnegatively curved metrics that are conformally equivalent to $g_0$. 
\end{enumerate}

That $\mathcal R^{k}_{\ge 0}(\mathbb R^2)$ and $\mathcal M^{k}_{\ge 0}(\mathbb R^2)$
cannot be separated by a countable set was announced by the first author
in~\cite{Bel-ober} who at that time 
only knew that
$e^{-2u}g_0$ is complete when $\a(u)<1$ and incomplete for $\a(u)>1$.
Completeness of $e^{-2u}g_0$  when $\a(u)=1$ was established
in collaboration with the second author, which led to stronger
applications.

Theorem~\ref{thm: char of completeness} is proved in Section~\ref{sec: subharm}.
Some auxiliary results are collected in Section~\ref{sec: misc}, while 
proofs of Theorems~\ref{thm: space of metrics countable}--\ref{thm: mod disconnected}
are given in Section~\ref{sec: applications}.

{\bf Acknowledgments:} 
Belegradek is grateful for NSF support (DMS-1105045).
We are thankful to the referee
for expository suggestions, and for spotting a mistake in the previous 
version which lead us to 
discover Theorem~\ref{thm: pi k homeo}.

\section{Subharmonic functions and complete metrics}
\label{sec: subharm}

We adopt notation of the introduction:
$g_0$ is the standard Euclidean metric, $u$
is a smooth subharmonic function on $\mathbb R^2$,
and $M(r, u):=\max\{u(z) : |z|=r\}$. Set 
$\mu(t, u):=M(e^t, u)$; 
when $u$ is understood we simply write $M$, $\mu$.
Subharmonicity of $u$ implies that $\mu$ is 
a convex function~\cite[Theorem 2.13]{HayKen-vol1}.
Hence $\mu$ has left and right derivatives everywhere,
and they are equal outside a countable subset,
and so the same holds for $M$.
By the maximum principle~\cite[Theorem 2.3]{HayKen-vol1} 
$M$ is strictly increasing (except when $u$ is constant), and
hence the same is true for $\mu$.
As mentioned in the introduction, the limit \[
\a(u):=\lim_{r\to \infty} \frac{M(r, u)}{\log r}\] exists 
in $[0,\infty]$, 
and $\a(u)=0$ if and only if $u$ is constant~\cite[Theorem 2.14]{HayKen-vol1}. 

It follows from the Hopf-Rinow theorem
that a Riemannian manifold is incomplete if and only if
it contains a locally rectifiable (or equivalently, a smooth) path that eventually leaves every compact 
set and has finite length. (Indeed, an incomplete manifold is geodesically incomplete,
and hence has a finite length geodesic that leaves compact subsets. Conversely, given a rectifiable 
path as above, choose a sequence of points on the path that escapes compact subsets,
and note that the sequence is bounded because the path has finite length.)

In the manifold $(\mathbb R^2, e^{-2u}g_0)$ the length of a path $\g$ 
equals $\int_\g e^{-u} ds$.

\begin{lem} 
\label{lem: complete > and <}
The metric $g=e^{-2u}g_0$ is
complete if  $a(u)<1$ and incomplete if $\a(u)>1$ or $\a(u)=\infty$.
\end{lem}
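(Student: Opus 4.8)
The plan is to control the $e^{-2u}g_0$-length of radial rays in the two cases using only the definition of $\a(u)$ as the growth rate of $M(r,u)$. First, recall that along the ray $\g_\th(r) = re^{i\th}$ the $g$-length of the tail $\{r \ge R\}$ is $\int_R^\infty e^{-u(re^{i\th})}\,dr$.

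\textbf{Incompleteness when $\a(u) > 1$ (including $\a(u)=\infty$).} Fix $\a' $ with $1 < \a' < \a(u)$. By definition of $\a(u)$ there is $R_0$ so that $M(r,u) \ge \a' \log r$ for all $r \ge R_0$; in particular for each such $r$ there is an angle where $u$ is large, but I need a single path, so I instead argue as follows. The sublevel set where $u(z) < \a' \log|z|$ cannot contain an entire circle $|z|=r$ for $r\ge R_0$, yet subharmonic functions are far from arbitrary: I would use the convexity of $\mu(t,u) = M(e^t,u)$ together with the fact that $u$ is continuous to construct, by a standard connectedness/continuity argument in the annulus, a path $\g$ going to infinity along which $u(\g(r)) \ge \a'\log r - C$. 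Along such a path, $e^{-u} \le e^C r^{-\a'}$, so the length $\int_{R_0}^\infty e^{C} r^{-\a'}\,dr$ converges because $\a' > 1$. Since $\g$ leaves every compact set and has finite $g$-length, the Hopf--Rinow characterization quoted in the excerpt gives that $g$ is incomplete. The main obstacle is producing the path on which $u$ actually achieves (a fixed fraction of) its maximal growth; I expect this to follow from a direct argument using that the open set $\{u(z) > \a' \log|z| - C\}$ meets every large circle and cannot be "disconnected across" the circle without violating the maximum principle, but making this rigorous is the delicate point.

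\textbf{Completeness when $\a(u) < 1$.} Here I want a lower bound on the $g$-length of \emph{every} path escaping to infinity. Fix $\a'$ with $\a(u) < \a' < 1$. By definition of $\a(u)$, there is $R_0$ with $M(r,u) \le \a' \log r$ for all $r \ge R_0$, hence $u(z) \le \a'\log|z|$ and so $e^{-u(z)} \ge |z|^{-\a'}$ for $|z| \ge R_0$. For any path $\g$ leaving every compact set, its $g$-length is at least the $g$-length of its radial projection (since the metric $e^{-2u}g_0$ dominates the pulled-back metric on the radial coordinate), which is at least $\int_{R_0}^{\infty} r^{-\a'}\,dr = \infty$ because $\a' < 1$. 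More carefully: parametrize a tail of $\g$ and estimate $\int_\g e^{-u}\,ds \ge \int e^{-u}\,|d|\g||\ge \int_{R_0}^{\infty} \rho^{-\a'}\,d\rho = \infty$, using $|z|\ge R_0$ along the tail. Thus no escaping path has finite length, and Hopf--Rinow gives completeness. This direction is routine.

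Finally, I would note the case $\a(u) = 0$ (i.e. $u$ constant) is covered by $\a(u) < 1$, and assemble the two halves into the statement. The substantive content is entirely in the incompleteness direction, specifically the construction of a single escaping path realizing the growth rate $\a(u)$; everything else is the Hopf--Rinow reformulation and an elementary integral comparison.
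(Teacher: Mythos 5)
Your completeness half ($\a(u)<1$) is correct and is essentially the paper's argument: $u\le M(r)\le \a'\log r$ for large $r$ forces $\int_\g e^{-u}\,ds\ge\int_{R_0}^\infty r^{-\a'}\,dr=\infty$ along any escaping path, and this direction really is routine.

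The incompleteness half has a genuine gap, which you partly flag but underestimate. Two separate things go wrong with the proposed path construction. First, while the maximum principle does show that a superlevel set $\{u>\a'\log|z|-C\}$ has an unbounded component (a bounded component avoiding the origin would have $u\le \a'\log|z|-C$ on its boundary and hence inside), a path $\g$ escaping to infinity inside that component need not be parametrizable by $r=|z|$ with $ds\approx dr$: the component can be a thin spiraling tract, so the arclength element $ds$ can be enormous compared to $dr$, and your estimate $\int_\g e^{-u}\,ds\le e^C\int r^{-\a'}\,dr$ is unjustified. Controlling the length of a finite-length escaping path is exactly the hard analytic content here, and it is not a ``standard connectedness/continuity argument.'' The paper fills this gap with two cited theorems: for $0<\a(u)<\infty$ it invokes Hayman's theorem on slowly growing subharmonic functions, which asserts that for almost every direction $\theta$ one has $0\le M(r)-u(re^{i\theta})\le c$ for all large $r$ --- i.e.\ the growth is realized along genuine \emph{rays}, for which $ds=dr$ and the integral comparison is legitimate; for $\a(u)=\infty$ (where no such slow-growth result is available) it instead quotes Huber's theorem producing a finite-length locally rectifiable escaping path directly. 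Second, note that your single argument cannot cover $\a(u)=\infty$ and $\a(u)\in(1,\infty)$ uniformly: the ``$u$ stays within $O(1)$ of $M(r)$ along a ray'' phenomenon is special to logarithmic growth and fails already for $u=\mathrm{Re}\,z$, so the case split in the paper is not cosmetic. As written, your proof establishes only the easy direction; the substantive direction rests on results you would need to either cite (as the paper does) or reprove.
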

\begin{proof}
If $\a(u)=\infty$, then incompleteness of $g$ can be extracted 
from~\cite{Hub-1957}, see~\cite{LRW} for a stronger result, 
who constructed a finite length locally rectifiable path that
goes to infinity in $\mathbb R^2$.  
If $\a(u)=0$, then $u$ is constant, so $g$ is complete. 

So we can assume that $\a(u)$ is positive and finite in which case 
Hayman~\cite[Theorem 2 and Remark (i) on page 75]{Hay-slow} proved
that there is a constant $c$ and a measure zero subset $Z$
of the unit circle such that $0\le M(r)-u(re^{i\theta})\le c$ 
for every $\theta\notin Z$ and all $r>r(\theta)$.

Suppose $\a(u)>1$, and fix $\theta\notin Z$,
and the corresponding ray $\g(r)=re^{i\theta}$, $r>r(\theta)$ 
on which $0\le M(r)-u(re^{i\theta})\le c$. 
Then $\int_\g e^{-u}$ is bounded above and below by positive multiples of
$\int_\g e^{-M}$. 
As $\frac{M(r)}{\log r}\to \a$, for any $\a_0\in (1,\a(u))$ there is $r_0$
with $M(r)>\a_0\log r$ for all $r>r_0$.
Shortening $\g$ to $r>r_0$, we get $\int_\g e^{-B}\le \int_{r_0}^\infty r^{-\a_0}<\infty$
proving incompleteness of $g$.

Suppose $\a(u)\in (0,1)$. Fix $\a_1\in (\a(u),1)$ and any smooth path $\s$ going to infinity.
Find $r_1$ with $M(r)<\a_1\log r$ for all $r>r_1$. Now
$u(re^{i\theta})\le M(r)$ implies
\[
\int_\s e^{-u}ds\ge\int_{r_1}^\infty r^{-\a_1} dr=\infty
\]  
so $g$ is complete.
\end{proof}

\begin{prop}
\label{prop: harmonic}
If $u$ is harmonic and nonconstant, then $e^{-2u}g_0$ is not complete and $\a(u)=\infty$.
\end{prop}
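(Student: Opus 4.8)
The plan is to prove $\alpha(u)=\infty$; the incompleteness of $e^{-2u}g_0$ then follows immediately from the $\alpha(u)=\infty$ case of Lemma~\ref{lem: complete > and <}. The first step is to pass to complex notation. Identifying $\mathbb R^2$ with $\mathbb C$, the function $u$ is harmonic on the simply connected domain $\mathbb C$, hence has a harmonic conjugate $v$, so that $f:=u+iv$ is entire; and $f$ is nonconstant since $u$ is (otherwise $f'=u_x-iu_y$ would vanish identically). The key object is the entire function $g:=e^{f}$, for which $|g(z)|=e^{u(z)}$ and therefore $\log M(r,g)=M(r,u)$ in the notation of Section~\ref{sec: subharm}. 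Note that $g$ has no zeros, so if $g$ were a polynomial it would be a nonzero constant and $f$ would be constant; hence $g$ is transcendental.

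The second step is the elementary fact that a transcendental entire function $g=\sum_n a_nz^n$ satisfies $\log M(r,g)/\log r\to\infty$. Indeed, if $M(r_k,g)\le r_k^{C}$ held along some sequence $r_k\to\infty$ for a fixed $C$, then the Cauchy estimates $|a_n|\le M(r_k,g)\,r_k^{-n}\le r_k^{C-n}\to 0$ would force $a_n=0$ for all $n>C$, making $g$ a polynomial; so for transcendental $g$ one has $M(r,g)>r^{C}$ for all large $r$, for every $C$, which gives the claim. Applying this to $g=e^{f}$ yields
\[
\alpha(u)=\lim_{r\to\infty}\frac{M(r,u)}{\log r}=\lim_{r\to\infty}\frac{\log M(r,g)}{\log r}=\infty,
\]
where the limit exists in $[0,\infty]$ by the basic property of subharmonic functions recalled in Section~\ref{sec: subharm}. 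Together with Lemma~\ref{lem: complete > and <} this completes the proof.

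I expect essentially no obstacle here; the mild points to handle carefully are the existence of a global harmonic conjugate on $\mathbb C$ (immediate since $\mathbb C$ is simply connected) and the quantifier bookkeeping in the Cauchy-estimate step. As an alternative to introducing $g=e^{f}$, one could argue directly: if $\alpha(u)$ were finite then $\max_{|z|=r}\mathrm{Re}\,f=M(r,u)$ would be $O(\log r)$, and the Borel--Carath\'eodory inequality would bound $\max_{|z|=r/2}|f|$ by $O(\log r)$ as well, forcing the entire function $f$, and hence $u$, to be constant, contradicting the hypothesis.
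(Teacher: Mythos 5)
Your proof is correct, but it takes a genuinely different route from the paper's. You establish $\a(u)=\infty$ by pure complex analysis: write $u=\mathrm{Re}\,f$ for an entire $f$ via a harmonic conjugate, observe that $e^{f}$ is a zero-free entire function and hence transcendental unless $f$ is constant, and invoke the elementary fact (via Cauchy estimates, or Borel--Carath\'eodory in your alternative) that $\log M(r,e^{f})=M(r,u)$ then outgrows every multiple of $\log r$. The paper instead argues geometrically: if $\a(u)$ were finite one could rescale to $\a(u)<1$, so $e^{-2u}g_0$ would be a \emph{complete} flat metric on $\mathbb R^2$, hence isometric to $g_0$ by a necessarily conformal (so affine) diffeomorphism, forcing $u$ to be constant. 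Both arguments then obtain incompleteness from the $\a(u)=\infty$ case of Lemma~\ref{lem: complete > and <}, which rests on Huber's construction. Your version is self-contained at the level of first-year complex analysis and uses only the incompleteness half of Lemma~\ref{lem: complete > and <}, whereas the paper's version leans on the completeness half together with the classification of complete flat metrics and of conformal automorphisms of the plane; it is shorter given the geometric machinery already in place and is closer in spirit to the rest of the section. Your argument is in fact a natural companion to the paper's subsequent Remark, which sketches the analytic route ($u=\mathrm{Re}\,f$, $e^{u}=|e^{f}|$) to incompleteness via Huber's theorem. The only cosmetic point is your overloading of the symbol $M(r,\cdot)$ for both $\max_{|z|=r}|g|$ and $\sup_{|z|=r}u$; the identity $\log M(r,e^{f})=M(r,u)$ is of course correct since $\log$ is increasing.
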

\begin{proof}
If $\a(u)$ is finite, then by rescaling we may assume that $\a(u)<1$
so that $e^{-2u}g_0$ is complete by Lemma~\ref{lem: complete > and <}.
Since $u$ is harmonic, $e^{-2u}g_0$ has zero curvature. Any two zero curvature metrics
on $\mathbb R^2$ are isometric, so
$e^{-2u}g_0=\psi^*g_0$ for some $\psi\in\mathrm{Diff}(\mathbb R^2)$.
It follows that $\psi$ is conformal, and hence $\psi$ or its composition
with the complex conjugation is affine.
Therefore $\psi^*g_0$ is a constant multiple of $g_0$, hence $u$
is constant.
\end{proof}

\begin{rmk}
There is a purely analytic proof of incompleteness:
since $u$ is harmonic it is the real part of an entire function $f$.
Thus $e^u$ is $|e^f|$ where $e^f$ is an entire function with no zeros,
and hence so is $e^{-f}$. Huber~\cite[Theorem 7]{Hub-1957} proves 
that there is a path going to infinity such that the integral of
$|e^{-f}|^{-1}=e^u$ over the path is finite. (Actually, Huber's result
applies to any non-polynomial entire function in place of $e^{-f}$,
and his proof is much simplified when the entire function has
finitely many zeros, as happens for $e^{-f}$; the case of 
finitely many zeros is explained on~\cite[page 71]{Kap}). 
\end{rmk}

\begin{lem}
\label{lem: S_alpha closed}
$S_\a$ is a closed convex subset in the Fr{\'e}chet space $C^\infty(\mathbb R^2)$.
\end{lem}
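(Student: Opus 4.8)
\textbf{Proof proposal for Lemma~\ref{lem: S_alpha closed}.}

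The plan is to verify convexity and closedness separately, both reducing to elementary properties of the functional $\a$ and of the $C^\infty$ topology on $C^\infty(\mathbb R^2)$.

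For convexity, suppose $u_0, u_1\in S_\a$ and $t\in[0,1]$; set $u_t=(1-t)u_0+tu_1$. Subharmonicity is preserved under nonnegative linear combinations since $\triangle u_t=(1-t)\triangle u_0+t\triangle u_1\ge 0$, so $u_t$ is subharmonic. For the growth condition, observe that $M(r,u_t)\le (1-t)M(r,u_0)+tM(r,u_1)$ because the supremum of a sum is at most the sum of the suprema over the circle $|z|=r$. Dividing by $\log r$ and letting $r\to\infty$ gives $\a(u_t)\le (1-t)\a(u_0)+t\a(u_1)\le\a$, so $u_t\in S_\a$. (Here I use that all three limits exist in $[0,\infty]$, as recorded in the introduction, so the inequality passes to the limit.)

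For closedness, let $u_n\in S_\a$ converge to $u$ in the $C^\infty$ topology, i.e. uniformly with all derivatives on compact sets. Then $\triangle u_n\to\triangle u$ pointwise (indeed locally uniformly), and each $\triangle u_n\ge 0$, so $\triangle u\ge 0$ and $u$ is subharmonic. It remains to show $\a(u)\le\a$. Fix $R>0$; since $M(\,\cdot\,,u_n)\to M(\,\cdot\,,u)$ uniformly on the compact set $\{R\le|z|\le 2R\}$ — more simply, $u_n\to u$ uniformly on each circle $|z|=r$ — we get $M(r,u)=\lim_n M(r,u_n)$ for each fixed $r$. Now I would exploit convexity of $r\mapsto M(e^t,u)$ (noted in the introduction, following \cite[Theorem 2.13]{HayKen-vol1}): for a convex function $\mu$ the quantity $\a(u)=\lim_{t\to\infty}\mu(t,u)/t$ equals $\sup_{t}\,(\mu(t,u)-\mu(t_0,u))/(t-t_0)$ for any fixed $t_0$, hence for any $t>t_0$ we have $\a(u)\ge (\mu(t,u)-\mu(t_0,u))/(t-t_0)$; conversely, to \emph{bound} $\a(u)$ from above it suffices to bound the difference quotients. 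Pick $t_0$. For $t>t_0$, $(\mu(t,u)-\mu(t_0,u))/(t-t_0)=\lim_n(\mu(t,u_n)-\mu(t_0,u_n))/(t-t_0)\le\lim_n\a(u_n)\le\a$, using that each $\mu(\,\cdot\,,u_n)$ is convex so its difference quotient is at most its asymptotic slope $\a(u_n)$. Taking $t\to\infty$ on the left gives $\a(u)\le\a$, so $u\in S_\a$.

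The only mildly delicate point is the interchange of limits in the last step, which I have arranged to avoid by working with a \emph{fixed} difference quotient of the convex function $\mu(\,\cdot\,,u)$ rather than with the limit defining $\a(u)$ directly; pointwise convergence $\mu(t,u_n)\to\mu(t,u)$ at the two points $t_0$ and $t$ is all that is needed, and that follows from uniform convergence of $u_n$ on the circles of radius $e^{t_0}$ and $e^{t}$. Everything else is routine. That $S_\a$ sits inside the Fréchet space $C^\infty(\mathbb R^2)$ is immediate from the definition; the Fréchet property of $C^\infty(\mathbb R^2)$ itself is recorded elsewhere in the paper (Lemma~\ref{lem: separable}).
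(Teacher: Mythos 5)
Your proof is correct. The convexity argument is identical to the paper's. For closedness the paper takes a slightly heavier route: it also reduces to pointwise convergence $\mu(\cdot,u_n)\to\mu(\cdot,u)$ of the convex functions $\mu(t,\cdot)=M(e^t,\cdot)$, but then invokes a chain of facts about convex functions (differentiability outside a countable set $\Sigma$, local uniform convergence, convergence of derivatives $\mu_n'\to\mu'$ off $\Sigma$), shows $\mu_n'\le\a$ off $\Sigma$ by the same monotonicity-of-slopes reasoning you use, passes the bound to $\mu'$, and integrates to get $\mu(t)\le\a(t-t_0)+\mu(t_0)$. You bypass the entire derivative apparatus by working directly with difference quotients: for a convex $\mu$ the quotient $(\mu(t)-\mu(t_0))/(t-t_0)$ is non-decreasing in $t$, hence bounded by the asymptotic slope $\a(u_n)\le\a$, and this bound survives the pointwise limit $n\to\infty$ at the two fixed points $t_0,t$ before you let $t\to\infty$. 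This is more elementary (no appeal to a.e. differentiability or to theorems on convergence of derivatives of convex functions) and avoids the exceptional set altogether; the paper's version buys nothing extra here, since the integrated conclusion it reaches is exactly your difference-quotient bound. The one point you rightly flag — that $\a(u)=\lim_t\mu(t,u)/t$ coincides with $\lim_t(\mu(t,u)-\mu(t_0,u))/(t-t_0)=\sup_{t>t_0}(\mu(t,u)-\mu(t_0,u))/(t-t_0)$ for convex $\mu$ — is immediate, so there is no gap.
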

\begin{proof} 
Convexity is immediate: If $u=s u_1+(1-s)u_0$ with $s\in [0,1]$
and $u_j\in S_\a$, then $u$ is subharmonic, and
$M(r, u)\le sM(r, u_1)+(1-s)M(r, u_0)$, so
dividing by $\log r$ and taking $r\to\infty$
yields $\a(u)\le s \a(u_1)+(1-s) \a(u_0)\le \a$. 

Fix $u_j\in S_\a$ and a smooth function $u$
such that $u_j\to u$ in $C^\infty(\mathbb R^2)$.
Clearly $u$ is subharmonic, and also
$M(r, u_j)\to M(r, u)$ for each $r$, so 
$\mu(\cdot , u_j)\to \mu(\cdot, u)$ pointwise. 
Set $\mu_j:=\mu(\cdot, u_j)$, $\mu:=\mu(\cdot, u)$. 
Since $\mu_j$, $\mu$ are convex, $\mu_j^\prime$, $\mu^\prime$ 
exist outside a countable subset $\Sigma$ \cite[page 7, Theorem C]{RobVar}, and
the convergence $\mu_j\to \mu$
is uniform on compact sets \cite[page 17, Theorem E]{RobVar}, 
which easily implies that
$\mu_j^\prime\to \mu^\prime$ outside $\Sigma$~\cite[Exercise C(9), page 20]{RobVar}.  

By convexity $\mu_j^\prime$, $\mu^\prime$
are non-decreasing outside $\Sigma$~\cite[page 5, Theorem B]{RobVar}.
It follows that $\mu_j^\prime\le\a$ outside $\Sigma$ for if 
$\mu_j^\prime(t_1)=\a_1>\a$, then $\mu_j^\prime\ge\a_1$ 
for $t\ge t_1$, so integrating we get $\mu_j(t)-\mu_j(t_1)\ge \a_1(t-t_1)$
which contradicts 
\[
\lim_{t\to \infty}\frac{\mu_j(t)}{t}\le \a.
\] 
Since $\mu_j^\prime\to\mu^\prime$ we get $\mu^\prime\le\a$ outside $\Sigma$. 
Integrating gives $\mu(t)\le\a (t-t_0)+\mu(t_0)$ everywhere,
so $u\in S_\a$.
\end{proof}

\begin{lem} 
\label{lem: S_alpha}
If $\a>0$, then $S_\a$ is not locally compact.
\end{lem}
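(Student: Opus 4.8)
The plan is to show that the zero function $0\in S_\a$ has no compact neighborhood in $S_\a$, which already shows $S_\a$ is not locally compact. Every neighborhood of $0$ in the $C^\infty$ topology contains a basic one of the form $U=\{u\in S_\a:\ \sup_{|\b|\le m,\ |z|\le R}|\d^\b u(z)|<\delta\}$ for some integer $m\ge 0$ and reals $R,\delta>0$; and since $C^\infty(\mathbb R^2)$ is metrizable, its compact subsets are sequentially compact. So it suffices to produce, inside each such $U$, a sequence in $S_\a$ with no subsequence converging in $C^\infty(\mathbb R^2)$: no compact subset of $S_\a$ can then contain $U$.

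To build such a sequence I would fix a point $p$ with $|p|>R$ and consider the shifted logarithmic bumps $\phi_n(z):=\log(|z-p|^2+n^{-2})$. A one-line computation gives $\triangle\phi_n=4n^{-2}(|z-p|^2+n^{-2})^{-2}\ge 0$, so each $\phi_n$ is smooth and subharmonic, and from $M(r,\phi_n)=\log((r+|p|)^2+n^{-2})$ one reads off $\a(\phi_n)=2$. Because $|z-p|\ge|p|-R>0$ on the disk $|z|\le R$, the derivatives of $\phi_n$ of order $\le m$ are bounded there uniformly in $n$ (indeed $\phi_n\to\log|z-p|^2$ in $C^\infty$ on that disk); let $A$ denote such a bound. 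Then I would choose $t>0$ with $2t\le\a$ and $tA<\delta$ and set $u_n:=t\phi_n$: this is smooth and subharmonic with $\a(u_n)=2t\le\a$, hence $u_n\in S_\a$, and $\sup_{|\b|\le m,\ |z|\le R}|\d^\b u_n|\le tA<\delta$, hence $u_n\in U$. But $u_n(p)=-2t\log n\to-\infty$, so $\{u_n\}$ cannot have a subsequence converging to a (finite-valued, smooth) limit, and the argument is complete.

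I expect the only delicate point to be the decoupling of ``being small on the prescribed disk'' from ``escaping to $-\infty$ somewhere'': this is exactly what is achieved by centering the bumps at a point $p$ outside the disk and then rescaling by a small $t$ — and the rescaling costs nothing, since we need $2t\le\a$ anyway to keep $u_n\in S_\a$. As an alternative, one could instead quote the general fact that a locally compact convex subset of a Hausdorff topological vector space must lie in a finite-dimensional affine subspace, and observe that $S_\a$ does not: for distinct real numbers $a$ the functions $\e\log(|z-a|^2+1)$ with $2\e\le\a$ lie in $S_\a$ and are linearly independent, since their Laplacians, restricted to the real axis, are rational functions with pairwise distinct poles.
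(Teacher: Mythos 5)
Your main argument is correct, and it takes a genuinely different route from the paper. The paper argues ``softly'': it fixes a non-harmonic $u\in S_\a$, picks a disk $D$ with $\triangle u>0$ on a neighborhood of $D$, and uses a bump function to graft arbitrary small $C^\infty(D)$-perturbations onto $u$ without leaving $S_\a$; the restriction map then would send a compact neighborhood of $u$ onto a compact neighborhood of $u\vert_D$ in the infinite-dimensional Fr\'echet space $C^\infty(D)$, which is impossible. Your proof instead exhibits an explicit sequence $u_n=t\log(|z-p|^2+n^{-2})$ that is uniformly controlled on a prescribed disk (hence stays in any given basic neighborhood of $0$ after scaling by a small $t$ with $2t\le\a$) yet satisfies $u_n(p)\to-\infty$, so no subsequence converges in $C^\infty(\mathbb R^2)$; together with metrizability, hence sequential compactness of compact sets, this shows $0$ has no compact neighborhood in $S_\a$. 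All the computations check out: $\triangle\log(|z-p|^2+c)=4c(|z-p|^2+c)^{-2}\ge 0$, $\a(\phi_n)=2$, and the uniform derivative bounds on $\{|z|\le R\}$ hold because $|z-p|^2+n^{-2}$ is bounded below there uniformly in $n$. What your approach buys is concreteness and independence from the grafting/bump-function step; what the paper's buys is that it locates the failure of local compactness at an arbitrary non-harmonic point of $S_\a$ rather than at the constants.

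One caution about your closing aside: the ``general fact'' that a locally compact convex subset of a Hausdorff topological vector space must lie in a finite-dimensional affine subspace is false. The Hilbert cube $\{x\in\ell^2: |x_n|\le 1/n\}$ is a compact (hence locally compact) convex subset of $\ell^2$ that spans an infinite-dimensional subspace; indeed the paper's whole setup relies on the distinction between closed convex sets that are locally compact (which can be infinite-dimensional compacta, by Keller's theorem all homeomorphic to the Hilbert cube) and those that are not (which are homeomorphic to $\ell^2$ by Dobrowolski--Toru\'nczyk). The correct Riesz-type statement applies to subspaces, not to arbitrary convex sets. So the alternative route does not work as stated; your primary argument, however, is complete and does not depend on it.
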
 
\begin{proof} 
Let us fix any $u\in S_\a$ with $0<\a(u)< \infty$, and suppose
arguing by contradiction that $u$ has a
compact neighborhood $V$ in $S_\a$.
Since $u$ is not harmonic, there is a closed disk 
$D\subset \mathbb R^2$ such that $\triangle u>0$ on $D$. 
The Fr\'echet space $C^\infty(D)$ is not locally compact
because it is an infinite dimensional topological vector space~\cite[Theorem 9.2]{Tre-TVS-book}.
Homogeneity of $C^\infty(D)$ implies that 
it contains no compact neighborhood.
To get a contradiction we show that 
the restriction map $\delta\co C^\infty(\mathbb R^2)\to C^\infty(D)$
takes $V$ to a compact neighborhood of $u\vert_D$. 
Compactness of $\delta(V)$ follows from the continuity of $\delta$. 
If  $\delta(V)$ were not a neighborhood of $u\vert_D$, there 
would exist a sequence $u_i\to u$ in $C^\infty(\mathbb R^2)$
with $u_i\vert_D\in C^\infty(D)\setminus\delta(V)$.
Let $\phi$ be a bump function
with $\phi\vert_D\equiv 1$ and such that the support of $\phi$ lies in a compact
neighborhood of $D$ on which $\triangle u>0$.
Then $u+\phi(u_i-u)$ converges to $u$ and only differs from $u$
on a compact set where $\triangle u>0$. Thus $u+\phi(u_i-u)\in V$
for large $i$, and $u+\phi(u_i-u)\vert_D=u_i\vert_D$,
which contradicts the assumption $u_i\vert_D\not\in\delta(V)$.
\end{proof}

\begin{thm}
$S_1$ equals the set of smooth subharmonic functions $u$
such that the metric $e^{-2u}g_0$ is complete.
\end{thm}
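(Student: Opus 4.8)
The plan is to prove the two inclusions separately, using the characterization of completeness in terms of $\a(u)$ that has essentially already been assembled. Recall that $S_1$ is by definition the set of smooth subharmonic $u$ with $\a(u)\le 1$, and we want to identify this with the set of smooth subharmonic $u$ for which $e^{-2u}g_0$ is complete.

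First I would dispose of the easy direction. By Lemma~\ref{lem: complete > and <}, if $u$ is subharmonic with $\a(u)>1$ or $\a(u)=\infty$, then $e^{-2u}g_0$ is incomplete. Contrapositively, if $e^{-2u}g_0$ is complete then $\a(u)\le 1$, so $u\in S_1$. This shows that the set of $u$ giving complete metrics is contained in $S_1$, and moreover the same lemma handles everything in $S_1$ except the borderline case $\a(u)=1$: indeed if $u\in S_1$ with $\a(u)<1$ (including $\a(u)=0$, where $u$ is constant), then $e^{-2u}g_0$ is complete.

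So the whole content is the remaining case: I need to show that if $u$ is smooth and subharmonic with $\a(u)=1$, then $e^{-2u}g_0$ is complete. This is the step I expect to be the main obstacle, since Lemma~\ref{lem: complete > and <} was carefully arranged to avoid it. The strategy is to use Hopf--Rinow: it suffices to show that every smooth path $\s$ going to infinity in $\mathbb R^2$ has infinite length $\int_\s e^{-u}\,ds$. A path escaping every compact set must have $|\s(t)|$ unbounded. The obstruction in the $\a(u)<1$ argument was the clean bound $M(r)<\a_1\log r$; when $\a(u)=1$ this fails, and we only get $M(r)\le (1+\e)\log r$ for $r$ large, which would give $\int r^{-(1+\e)}\,dr<\infty$ — not good enough. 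The idea must be to exploit the finer asymptotics available when $\a(u)=1$: along a path going to infinity the radial coordinate $r$ increases through arbitrarily large values, and $u(\s(t))\le M(|\s(t)|)$. One wants to estimate $\int e^{-u}\,ds\ge\int e^{-M(r)}\,dr$ in a form that still diverges. Since $\a(u)=1$ means $M(r)/\log r\to 1$, we can write $M(r)=\log r+\psi(r)$ where $\psi(r)=o(\log r)$, and the length is bounded below by a multiple of $\int r^{-1}e^{-\psi(r)}\,dr$. Convexity of $\mu(t)=M(e^t)$ forces $\mu'(t)\to\a(u)=1$, so $\mu(t)-t$ is convex and, having slope tending to $0$, is either bounded above or tends to $-\infty$; in either case $\psi(r)=\mu(\log r)-\log r$ is bounded above, whence $\int r^{-1}e^{-\psi(r)}\,dr\ge c\int r^{-1}\,dr=\infty$. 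Thus the metric is complete. I would present this as: the convexity of $\mu$ together with $\lim_{t\to\infty}\mu(t)/t=1$ implies $\mu(t)-t$ is bounded above (a nonincreasing-slope convex function with slopes $\to 1$ lies below any of its tangent lines, all of which have slope $\le 1$ once we are far enough out, actually slope exactly bounded by the limiting slope $1$), so $M(r)\le\log r+C$ for all large $r$, and then for any path $\s$ to infinity, parametrizing by the radial distance, $\int_\s e^{-u}\,ds\ge\int_{r_0}^{\infty}e^{-M(r)}\,dr\ge e^{-C}\int_{r_0}^\infty\frac{dr}{r}=\infty$. Together with the first paragraph this gives $S_1=\{u:\ e^{-2u}g_0\ \text{complete}\}$, i.e. precisely Theorem~\ref{thm: char of completeness} restated.

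\begin{proof}
This is a restatement of Theorem~\ref{thm: char of completeness} together with the
fact that $S_1$ consists of subharmonic $u$ with $\a(u)\le 1$.
If $e^{-2u}g_0$ is complete, then by Lemma~\ref{lem: complete > and <}
we cannot have $\a(u)>1$ or $\a(u)=\infty$, so $\a(u)\le 1$ and $u\in S_1$.
Conversely let $u\in S_1$. If $\a(u)<1$ then $e^{-2u}g_0$ is complete
by Lemma~\ref{lem: complete > and <}, so assume $\a(u)=1$.
The function $\mu(t)=M(e^t,u)$ is convex with $\mu(t)/t\to 1$ as $t\to\infty$;
hence its right derivative $\mu'$ is non-decreasing and, since
$\mu(t)/t\to 1$, satisfies $\mu'(t)\le 1$ for all $t$ (if $\mu'(t_1)>1$ then
$\mu(t)\ge \mu(t_1)+\mu'(t_1)(t-t_1)$ for $t\ge t_1$ forces $\liminf\mu(t)/t>1$).
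Integrating, $\mu(t)\le \mu(t_0)+(t-t_0)$ for $t\ge t_0$, i.e. there is a
constant $C$ with $M(r,u)\le \log r+C$ for all large $r$.
Now let $\s$ be any smooth path in $\mathbb R^2$ leaving every compact set;
then $|\s|$ is unbounded, and parametrizing by $r=|\s|$ and using
$u(\s)\le M(|\s|,u)$ we get, for suitable $r_0$,
\[
\int_\s e^{-u}\,ds\ \ge\ \int_{r_0}^{\infty} e^{-M(r,u)}\,dr
\ \ge\ e^{-C}\int_{r_0}^{\infty}\frac{dr}{r}\ =\ \infty.
\]
By the Hopf--Rinow criterion recalled above, $e^{-2u}g_0$ is complete.
\end{proof}
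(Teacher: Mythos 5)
Your proof is correct, and for the only nontrivial case $\a(u)=1$ it takes a genuinely different and substantially shorter route than the paper's. The paper argues by contradiction from a finite-length escaping path: it smooths $\mu$ to a convex $\nu$ with $\mu-1\le\nu\le\mu$ (Azagra's theorem), forms the radial subharmonic function $N(r)=\nu(\log r)$ and the quantity $d(r)=N(r)/\log r-1\to0$, and runs a three-case analysis on the sign of $d'$, using $0\le\triangle N$ to rule out the mixed case and to extract, when $d'\le0$, the bound $d(r)\le-c/\log r$, i.e.\ $N(r)\le\log r+|c|$. You obtain the same bound $M(r)\le\log r+C$ in one stroke from the convexity of $\mu(t)=M(e^t,u)$ together with $\mu(t)/t\to1$: the convex function $\mu(t)-t$ has nonpositive right derivative everywhere (otherwise its supporting line would force $\liminf\mu(t)/t>1$), hence is non-increasing and in particular bounded above. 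This eliminates the smoothing step, the subharmonicity computation for $N$, and the case analysis; notably, the paper itself derives exactly your estimate $\mu(t)\le\a(t-t_0)+\mu(t_0)$ for $u\in S_\a$ inside the proof of Lemma~\ref{lem: S_alpha closed}, so your shortcut rests only on facts the paper already records. The closing estimate $\int_\s e^{-u}\,ds\ge\int_{r_0}^\infty e^{-M(r,u)}\,dr=\infty$ along an escaping path is the same device the paper uses in Lemma~\ref{lem: complete > and <} for $\a(u)<1$, and your direct formulation (every escaping path has infinite length) versus the paper's argument by contradiction is an immaterial difference; the one small point to phrase carefully is that $|\s|$ need not be monotone, so ``parametrizing by $r=|\s|$'' should be read as the standard estimate $ds\ge\bigl|d|\s|\bigr|$ combined with the fact that $|\s|$ sweeps out all of $[r_0,\infty)$.
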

\begin{proof}
If $u\notin S_1$, then $e^{-2u}g_0$ is incomplete by 
Lemma~\ref{lem: complete > and <}. Suppose $u\in S_1$
while $e^{-2u}g_0$ is incomplete, and aim for a contradiction. 
By incompleteness of $e^{-2u}g_0$ there is a smooth path $\g$ in $\mathbb R^2$ 
going to infinity such that $\int_\g e^{-u}ds<\infty$. Now $u\le M$ implies
$\int_\g e^{-M}ds<\infty$. It is convenient to replace $M$, $\mu$ with
nearby smooth functions with similar properties which is possible by
a result of Azagra~\cite{Aza} that there is a smooth convex function 
$\nu$ defined on $\mathbb R$ such that $\mu-1\le \nu\le\mu$. 
By Lemma~\ref{lem: complete > and <} we have $\a(u)=1$, so 
$\frac{\nu(t)}{t}\to 1$ as $t\to\infty$.

For $r>0$ set $N(r):=\nu(\log r)$; the function $(x,y)\to N(r)$ is
subharmonic: 
\[
\triangle N=\nu^{\prime\prime}(t_x^2+t_y^2)+\nu^\prime\triangle t=\nu^{\prime\prime}r^{-2}.
\]
Here $t_x$, $t_y$ are partial derivatives of $t=\log r$; note that
$\triangle t=0$ while $t_x=\frac{x}{r^2}$ and $t_y=\frac{y}{r^2}$.

Set $d(r):= \frac{N(r)}{\log r}-1$ so that $e^{-N(r)}=r^{-1-d(r)}$. 
Since $u\in S_1$ and  $e^{-2u}g_0$ is incomplete, 
Lemma~\ref{lem: complete > and <} implies that
$u\notin S_\a$ for $\a<1$ so that
$d(r)\to 0$ as $r\to\infty$.
Also $M-1\le N\le M$ so that $\int_\g r^{-1-d}ds=\int_\g e^{-N}ds<\infty$.

In deriving a contradiction it helps consider the following cases.

If $d^{\,\prime}\ge 0$ for all large $r$, then since $d$ converges to zero as $r\to \infty$,
we must have $d\le 0$ for large $r$, so after shortening $\g$ we get
\[
\int_\g r^{-1-d}ds\ge \int_\g r^{-1}ds
=\infty\]
which is a contradiction.

If $d^\prime$ is neither nonnegative nor nonpositive as $r\to\infty$, then there is a point where 
$d^\prime$ and $d^{\prime\prime}$ are both negative, which contradicts subharmonicity of $N$ for $r>1$
as \begin{equation}
\label{form: N subharmonic via d}
0\le \triangle N=N^{\prime\prime}+\frac{N^\prime}{r}=
d^{\prime\prime}\log r+d^\prime\left(\frac{\log r}{r}+\frac{2}{r}\right).\end{equation}

It remains to deal with the case when $d^\prime \le 0$ for large $r$. Multiply
(\ref{form: N subharmonic via d}) by $r\log r$ to get
\[
0\le r\log^2 r\left(d^{\prime\prime}+d^\prime\left(\frac{1}{r}+\frac{2}{r\log r}\right)\right)=
\left( r(\log r)^2 d^\prime \right)^\prime
\]
which integrates over $[\rho, r]$ to $d^\prime (\rho)\, \rho\log^2\! \rho\le d^\prime (r)\, r\log^2\! r$.
Since $d^\prime \le 0$ for all large $r$, we conclude that
$c:=d^\prime (\rho ) \rho\log^2\! \rho$ is a nonpositive constant.
Set $f(r):=-\frac{c}{\log r}$ so that $f^\prime=\frac{c}{r\log^2\! r}\le d^\prime$. 
Integrating $d^\prime-f^\prime\ge 0$ over $[r, R]$ gives $d(R)-f(R)\ge d(r)-f(r)$
and since $d(R)$, $f(R)$ tend to zero as $R\to \infty$, 
we get $d(r)\le f(r)$ for all large $r$. 
Hence $\int_\g r^{-1-d}ds\ge \int_\g r^{-1-f}ds=e^c\int_\g r^{-1}ds=\infty$
which again is a contradiction.
\end{proof}

\section{Loose ends}
\label{sec: misc}

In this section we justify some straightforward claims made in the introduction.

\begin{lem}
\label{lem: conformal unique}
If $g$ is conformal to $g_0$, then there are unique 
$\varphi\in\mathrm{Diff}^+_{0,1}(\mathbb R^2)$ and $v\in C^\infty(\mathbb R^2)$
such that $g$ equals $\varphi^*e^{v} g_0$.
\end{lem}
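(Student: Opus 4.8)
The plan is to prove existence and uniqueness separately. For existence, start with the hypothesis that $g = e^{w}g_0$ for some smooth function $w$ on $\mathbb{R}^2$, and look for $\varphi\in\mathrm{Diff}^+_{0,1}(\mathbb{R}^2)$ and $v\in C^\infty(\mathbb{R}^2)$ with $\varphi^* e^{v}g_0 = e^{w}g_0$. Since $\varphi^*(e^v g_0) = (v\circ\varphi) \cdot e^{\,?}$ ... more precisely, $\varphi^*(e^{v}g_0) = e^{v\circ\varphi}\,\varphi^*g_0$, we want $\varphi^* g_0$ to be conformal to $g_0$, which holds exactly when $\varphi$ is a conformal (or anticonformal) diffeomorphism. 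Because any two flat metrics on $\mathbb{R}^2$ are equivalent and $\mathrm{Diff}^+_{0,1}$ forces orientation preservation, the natural move is: $\varphi$ should be a biholomorphism of $\mathbb{C}$. But a biholomorphism of $\mathbb{C}$ is affine, so this is too rigid — instead one only needs $\varphi$ to pull $e^{w}g_0$ back to a \emph{constant} multiple of $g_0$, i.e. $\varphi$ should be a global isometry from $(\mathbb{R}^2, c\,g_0)$ to $(\mathbb{R}^2, e^{w}g_0)$. The existence of such a $\varphi$ is exactly the content of the Blanc–Fiala/Huber uniformization statement quoted in the introduction: $e^{w}g_0$, being conformal to $g_0$, is isometric to $e^{-2u}g_0$... no — the cleanest route is simply to invoke that any metric conformal to $g_0$ on $\mathbb{R}^2$ admits an isothermal global coordinate equal to the identity after composing with a conformal automorphism; concretely, $(\mathbb{R}^2, e^{w}g_0)$ is a simply connected Riemann surface, so by the uniformization theorem it is biholomorphic to $\mathbb{C}$ or the disk, and one shows it must be $\mathbb{C}$ (the metric $e^w g_0$ need not be complete, but conformal type is what matters, and a metric conformal to $g_0$ on all of $\mathbb{R}^2=\mathbb{C}$ has parabolic type). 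That biholomorphism $\psi\colon\mathbb{C}\to(\mathbb{R}^2,e^{w}g_0)$ satisfies $\psi^*(e^{w}g_0) = e^{v'}g_0$ for some smooth $v'$, and any two such $\psi$ differ by an affine biholomorphism of $\mathbb{C}$.

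Next I would normalize. Given one such $\psi$, postcompose with the unique affine biholomorphism $z\mapsto az+b$ of $\mathbb{C}$ sending the preimages of $0$ and $1$ to $0$ and $1$; this produces $\varphi$ with $\varphi^{-1}$... let me set it up so that $\varphi\in\mathrm{Diff}^+_{0,1}(\mathbb{R}^2)$ — holomorphic diffeomorphisms of $\mathbb{C}$ fixing $0$ and $1$ are automatically the identity, so I should instead take $\varphi$ to be the \emph{inverse} direction: $\varphi = \psi^{-1}$ composed with a fixed affine map, arranged so $\varphi(0)=0$, $\varphi(1)=1$. Then $\varphi$ is a smooth orientation-preserving diffeomorphism of $\mathbb{R}^2$ fixing $0,1$; it is isotopic to the identity because $\mathrm{Diff}^+(\mathbb{R}^2)$ is connected (indeed contractible). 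Setting $v := w - (v'\circ\varphi)$ — i.e. reading off $v$ from $\varphi^* e^{v}g_0 = e^{w}g_0$ — gives a smooth $v$ with the required identity, completing existence.

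For uniqueness, suppose $\varphi_1^* e^{v_1}g_0 = \varphi_2^* e^{v_2}g_0$. Then $\eta := \varphi_2\circ\varphi_1^{-1}$ satisfies $\eta^*(e^{v_2}g_0) = e^{v_1\circ\varphi_1^{-1}}g_0$, so $\eta$ is a conformal orientation-preserving diffeomorphism of $\mathbb{R}^2$, hence a biholomorphism of $\mathbb{C}$, hence affine: $\eta(z)=az+b$ with $a\in\mathbb{C}^\times$. But $\eta$ fixes $0$ and $1$ (since $\varphi_1,\varphi_2$ both do), forcing $b=0$, $a=1$, so $\eta=\mathrm{id}$, i.e. $\varphi_1=\varphi_2$; then $e^{v_1}g_0 = e^{v_2}g_0$ gives $v_1=v_2$.

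The main obstacle is the existence half, specifically verifying that $(\mathbb{R}^2, e^{w}g_0)$ has parabolic conformal type so that the uniformizing map lands in $\mathbb{C}$ rather than the disk; this is where I would lean on the cited results of Huber and Blanc–Fiala (and, if needed, a direct argument that a conformal metric on all of $\mathbb{C}$ cannot be hyperbolic because $\mathbb{C}$ carries no bounded nonconstant harmonic function, whereas a hyperbolic uniformization would produce one via the Green's function). Everything else — the affine rigidity of $\mathrm{Aut}(\mathbb{C})$, the normalization by $0,1$, connectivity of $\mathrm{Diff}^+(\mathbb{R}^2)$, and solving for $v$ — is routine.
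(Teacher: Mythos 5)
Your argument is correct in substance, and its load-bearing ingredients --- affine rigidity of $\mathrm{Aut}(\mathbb C)$, normalization at $0$ and $1$, connectedness of $\mathrm{Diff}^+(\mathbb R^2)$, and solving for $v$ --- are exactly the ones the paper uses. The difference is in how you produce the conformal parametrization in the existence half. The paper reads the hypothesis as saying that $g$ is already given in the form $\psi^*e^{f}g_0$ for some $\psi\in\mathrm{Diff}(\mathbb R^2)$ and $f\in C^\infty(\mathbb R^2)$, and then simply normalizes $\psi$: postcompose with $z\to\bar z$ if $\psi$ reverses orientation and with an affine map $z\to az+b$ to fix $0$ and $1$, absorbing the resulting constant factor $|a|^2$ into the conformal exponent. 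You instead route the argument through the uniformization theorem and flag the parabolic type of $(\mathbb R^2,g)$ as ``the main obstacle.'' It is not an obstacle at all: under either reading of the hypothesis (either $g=e^{w}g_0$ pointwise, or $g=\psi^*e^{f}g_0$ up to a diffeomorphism) the Riemann surface $(\mathbb R^2,g)$ comes equipped with an explicit biholomorphism to $\mathbb C$ --- the identity in the first case, and $\psi$ (composed with conjugation if it reverses orientation) in the second --- so no uniformization or type argument is needed; a genuine type question would arise only for a metric \emph{not} assumed conformal to $g_0$, where the statement is false. Indeed, under your stated hypothesis $g=e^{w}g_0$, existence is immediate with $\varphi=\mathrm{id}$ and $v=w$, which shows the uniformization detour is doing no work. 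Two minor further points: in the uniqueness step the right-hand side should be $e^{v_1}g_0$ rather than $e^{v_1\circ\varphi_1^{-1}}g_0$ (harmless, since all you use is that $\eta$ is conformal and fixes $0,1$); and when you normalize, make sure the orientation-reversing case is handled explicitly if you start from an arbitrary $\psi$ rather than from a biholomorphism.
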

\begin{proof}
Any metric $g$ conformal to $g_0$ can be written as $\psi^* e^{f}g_0=e^{f\circ\psi}\psi^*g_0$
where $\psi\in\mathrm{Diff}(\mathbb R^2)$ and $f\in C^\infty(\mathbb R^2)$.
To prove existence recall that any diffeomorphism of $\mathbb R^2$
is isotopic either to the identity or to the reflection $z\to\bar z$. 
The metric $\psi^*g_0$  
does not change when we postcompose $\psi$ with an isometry
of $g_0$, and postcomposing $\psi$ with an affine map $z\to az+b$, $a,b\in\mathbb C$ results in rescaling
which can be subsumed into $f\circ\psi$ changing it by an additive constant,
see the proof of Theorem~\ref{thm: mod disconnected}.
Thus composing with $z\to\bar z$ if needed, and with an affine map we can
arrange $\psi$ to lie in $\mathrm{Diff}^+_{0,1}(\mathbb R^2)$.

To see uniqueness rewrite $\varphi_1^*e^{v_1} g_0=\varphi_2^* e^{v_2} g_0$ as
\begin{equation}
\label{form: conformal aut}
(\varphi_1\circ\varphi_2^{-1})^* g_0=e^{v_2-v_1\circ \varphi_1\circ\varphi_2^{-1}} g_0
\end{equation}
where $\varphi_1, \varphi_2\in \mathrm{Diff}^+_{0,1}(\mathbb R^2)$. Hence 
$\varphi_1\circ\varphi_2^{-1}$ is a conformal automorphism of $g_0$
that preserves orientation and fixes $0$, $1$. It follows that $\varphi_1\circ\varphi_2^{-1}$
is the identity, see the proof of Theorem~\ref{thm: mod disconnected}.
Hence (\ref{form: conformal aut}) implies $v_1=v_2$.
\end{proof}

Let us think of $\mathcal R^{k}_{\ge 0}(\mathbb R^2)$ as a subset
of $C^\infty(\mathbb R^2,\mathrm{Bil}(\mathbb R^2))$, where 
$\mathrm{Bil}(\mathbb R^2)$ is the vector space of
real bilinear forms on $\mathbb R^2$. Since a subspace of a separable metrizable 
space is separable and metrizable, the following lemma implies that 
$\mathcal R^k_{\ge 0}(\mathbb R^2)$ is separable and metrizable, and 
$\mathcal R^\infty_{\ge 0}(\mathbb R^2)$ is completely metrizable.

\begin{lem}
\label{lem: separable} Let $N$, $M$ be smooth manifolds and $0\le k\le \infty$.
With the $C^k$ topology the space $C^\infty(M,N)$ is 
\newline\textup{(1)} separable
and metrizable, 
\newline\textup{(2)}
completely metrizable if $k=\infty$,
\newline\textup{(3)}
a Fr\'echet space if $N$ is a Euclidean space and $k=\infty$.
\end{lem}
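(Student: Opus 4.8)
\textbf{Proof proposal for Lemma~\ref{lem: separable}.}

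The plan is to reduce everything to standard facts about function spaces with the $C^k$ topology, organized around the way the topology is defined by seminorms on compact sets. First I would fix an exhaustion of $M$ by compact sets $K_1\subset K_2\subset\cdots$ with $K_i\subset\mathrm{Int}(K_{i+1})$ and $\bigcup_i K_i=M$, together with a proper embedding of $N$ into some $\mathbb R^q$ (Whitney). For finite $k$, the $C^k$ topology on $C^\infty(M,N)$ is the one induced by the countable family of seminorm-type gauges measuring, for each $i$, the supremum over $K_i$ of the norms of all partial derivatives of order $\le k$ (computed in fixed coordinate charts covering $K_i$, with the target coordinates coming from $\mathbb R^q$); for $k=\infty$ one uses all orders $\le j$ on $K_i$, indexed by $(i,j)\in\mathbb N\times\mathbb N$. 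In either case there are countably many gauges, so the topology is pseudometrizable, and it is Hausdorff since a map is determined by its values; hence it is metrizable, which is the core of (1). I would write down the explicit metric $d(f,g)=\sum_n 2^{-n}\dfrac{p_n(f,g)}{1+p_n(f,g)}$ where $p_n$ runs over the chosen gauges (using $p_n(f,g):=$ the $n$-th gauge applied to $f-g$ in the linear case for (3), and $p_n(f,g):=$ the $n$-th gauge of ``$f$ versus $g$'' read off in charts in general).

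For separability in (1), I would argue that $C^\infty_c$-approximation plus a countable dense set does the job: it suffices to find a countable set that is dense for each gauge $p_n$ simultaneously, and since each $p_n$ only sees behavior on a compact set $K_n$ to finite order, one can first approximate an arbitrary smooth map by one that on a neighborhood of $K_n$ is a polynomial (in the chart coordinates) — or more robustly, invoke that smooth maps $M\to\mathbb R^q$ are dense in $C^0_{\mathrm{loc}}$ and that smooth maps with rational polynomial coordinate expressions on each chart are $C^k$-dense on compacta — and then note $N\subset\mathbb R^q$ is a retract of a tubular neighborhood so one can push approximating $\mathbb R^q$-valued maps back into $N$. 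Assembling countably many such countable families over all $n$ gives a countable dense subset of $C^\infty(M,N)$.

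For (2), completeness when $k=\infty$: a Cauchy sequence $f_m$ in the metric $d$ is Cauchy with respect to every gauge, hence on each compact $K_i$ the $f_m$ and all their derivatives converge uniformly; by the standard theorem that a uniform limit of $C^r$ functions whose derivatives up to order $r$ also converge uniformly is $C^r$ with the expected derivatives, one gets a $C^\infty$ limit on each $K_i$, and these patch to a global smooth map $f$ with $f_m\to f$ in the $C^\infty$ topology. The limit lands in $N$ because $N$ is closed in $\mathbb R^q$ (properness of the embedding) and $C^0$-convergence is already forced. This is where it matters that $k=\infty$: for finite $k$ the $C^k$ limit of smooth maps need only be $C^k$, so the space $C^\infty(M,N)$ is not closed in $C^k(M,N)$ and completeness fails, consistent with the statement. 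Finally (3) is immediate once $N=\mathbb R^q$: the gauges are genuine seminorms on the vector space $C^\infty(M,\mathbb R^q)$, there are countably many, they separate points, and by (1)+(2) the space is metrizable and complete, so it is a Fr\'echet space by definition.

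The main obstacle is the separability argument in its full generality — making the countable dense family genuinely work simultaneously for all the seminorms and for an arbitrary target manifold $N$, rather than just $N=\mathbb R^q$. The cleanest route is probably to prove separability of $C^\infty(M,\mathbb R^q)$ directly (polynomials with rational coefficients in chart coordinates, glued with a fixed partition of unity subordinate to a countable atlas, and truncated to finitely many charts depending on $n$), and then deduce the case of general $N$ from the tubular-neighborhood retraction $\mathbb R^q\supset U\xrightarrow{\ r\ }N$, checking that post-composition with the smooth map $r$ is continuous for the $C^k$ topology so that it carries a dense subset of maps into $U$ to a dense subset of $C^\infty(M,N)$. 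Everything else is bookkeeping with seminorms.
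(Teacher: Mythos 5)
Your argument is correct in substance but takes a genuinely different route from the paper. The paper's proof is a reduction: it embeds $C^k(M,N)$ as a closed subset of $C^0(M,J^k(M,N))$, where $J^k(M,N)$ is the $k$-jet manifold, and then quotes general facts about $C^0(X,Y)$ with the compact-open topology (countable basis when $X$ is locally compact and both spaces are second countable; complete metrizability when $Y$ is completely metrizable), so that separability, metrizability, and complete metrizability are all inherited from the ambient $C^0$ mapping space. You instead build everything by hand: a proper embedding $N\hookrightarrow\mathbb R^q$, an exhaustion of $M$ by compacta, an explicit countable family of gauges and the standard metric $\sum_n 2^{-n}p_n/(1+p_n)$, polynomial approximation with rational coefficients for separability, and the classical theorem on uniform convergence of derivatives for completeness. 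Your approach is more self-contained and makes the Fr\'echet structure in (3) transparent (the gauges are literally the defining seminorms), at the cost of more bookkeeping; the paper's approach outsources all the analysis to two textbook citations and one observation about jet bundles.

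One step in your sketch deserves tightening: the tubular-neighborhood retraction argument for separability of $C^\infty(M,N)$ has the defect that a map in your countable dense subset of $C^\infty(M,\mathbb R^q)$ is only guaranteed to land in the tubular neighborhood $U$ over the compact set $K_n$ on which it approximates $f$, not globally, so $r\circ g$ need not be defined as an element of $C^\infty(M,N)$. You flag this as the main obstacle, and the clean repair is to bypass the retraction entirely: once you know $C^\infty(M,\mathbb R^q)$ is separable and metrizable and that the $C^k$ topology on $C^\infty(M,N)$ agrees with the subspace topology induced by the closed embedding $N\subset\mathbb R^q$, separability and metrizability of $C^\infty(M,N)$ follow because both properties pass to arbitrary subspaces (this hereditary argument is also the one the paper uses, just with a different ambient space). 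With that substitution the proof goes through.
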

\begin{proof}
The space $C^\infty(M,N)$ sits in $C^k(M,N)$ which embeds as a closed subset into 
$C^0(M, J^k(M,N))$
where $J^k(M,N)$ is the space of $k$-jets which is a $C^0$ 
manifold, see e.g.~\cite[Section 2.4]{Hir}. 
Separability is implied
by having a countable basis, and since the latter property is inherited by
subspaces it suffices to show that $C^0(M, J^k(M,\mathbb R))$
has a countable basis, but in general if the spaces  
$X$, $Y$ have a countable basis and and if $X$ is locally compact,
then $C^0(X,Y)$ with the compact-open topology has a countable 
basis~\cite[Theorem XII.5.2]{Dug}. Similarly, metrizability
is inherited by subspaces, and complete metrizability is inherited
by closed subspaces, while $C^0(X,Y)$ with the compact-open topology
is completely metrizable whenever $X$ is locally compact and $Y$ is 
completely metrizable~\cite[Theorem 2.4.1]{Hir}. 
A proof of (3) can be found in~\cite[Example 10.I]{Tre-TVS-book}.
\end{proof}

\begin{prop}
\label{prop: cone, not homeo}
There is an isometry class in $\mathcal M^{k}_{\ge 0}(\mathbb R^2)$ that
lies in every neighborhood of the isometry class of $g_0$.
\end{prop}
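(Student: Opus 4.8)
The plan is to produce a non-flat complete nonnegatively curved metric on $\mathbb R^2$ whose isometry class is arbitrarily close to that of $g_0$ in the $C^k$ topology on the moduli space. Since the moduli space is a quotient of $\mathcal R^k_{\ge 0}(\mathbb R^2)$, it suffices to exhibit, for every neighborhood of $g_0$, a metric $g$ in that neighborhood that is \emph{isometric} to a fixed non-flat metric. The natural mechanism is rescaling: if $g = e^{-2u}g_0$ with $u$ subharmonic and $\alpha(u)\le 1$, then for a constant $c>0$ the metric $c\,g = e^{-2(u - \frac12\log c)}g_0$ is isometric to $g$ (pull back by the dilation $z\mapsto \sqrt{c}\,z$, which is a $g_0$-homothety), hence the isometry classes of $g$ and $cg$ coincide in $\mathcal M^k_{\ge 0}(\mathbb R^2)$.

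So first I would pick a single non-flat $u_0\in S_1$ that is supported away from the origin — concretely, take a smooth subharmonic $u_0$ that is constant (say $0$) outside a compact set $D$ not containing $0$, with $\triangle u_0 > 0$ somewhere on $D$. Such a $u_0$ lies in $S_0\subset S_1$ after subtracting its value at infinity? No — being compactly-supported-modulo-constant gives $\alpha(u_0)=0$, so $e^{-2u_0}g_0$ is automatically complete by Lemma~\ref{lem: complete > and <}, and it is non-flat since $\triangle u_0>0$ somewhere. Set $g_0^{(t)} := e^{-2t}\,e^{-2u_0}g_0 = e^{-2(u_0+t)}g_0$ for $t>0$. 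By the rescaling observation, every $g_0^{(t)}$ is isometric to the fixed non-flat metric $e^{-2u_0}g_0$, so all $g_0^{(t)}$ represent one and the same point $[g]\in\mathcal M^k_{\ge 0}(\mathbb R^2)$.

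The key step is then to show $g_0^{(t)}\to g_0$ in $\mathcal R^k_{\ge 0}(\mathbb R^2)$ as $t\to\infty$, which is \emph{false} as stated — $e^{-2t}g_0$ shrinks to zero, not to $g_0$. The right move is to go the other direction: replace $t$ by $-t$, i.e. use $g^{(t)}:=e^{2t}e^{-2u_0}g_0$; this blows up. Neither limit is $g_0$. The correct construction uses a \emph{family} $u_s$ with $u_s\to 0$ in $C^\infty(\mathbb R^2)$ (so $e^{-2u_s}g_0\to g_0$ in every $C^k$) while each $e^{-2u_s}g_0$ remains isometric to a fixed non-flat metric. Take $u_s(z):=u_0(z/s)$ for large $s$: as $s\to\infty$ the bump $u_0$ spreads out toward infinity, and on every fixed compact set $u_s$ and all its derivatives tend to $0$, so $e^{-2u_s}g_0\to g_0$ in $C^k$ for each $k$. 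But $u_s(z)=u_0(z/s)$ means $e^{-2u_s}g_0 = \delta_s^*(e^{-2u_0}g_0)$ where $\delta_s(z)=z/s$ is a diffeomorphism of $\mathbb R^2$, so $e^{-2u_s}g_0$ is isometric to the fixed non-flat metric $e^{-2u_0}g_0$. Hence the isometry class of the non-flat metric $e^{-2u_0}g_0$ lies in every $C^k$-neighborhood of the class of $g_0$.

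The main obstacle is the verification that $u_s\to 0$ in the $C^\infty$ (hence $C^k$) topology, i.e. uniform convergence of all derivatives on compact sets: on a fixed ball $B_R$, for $s>R$ the function $u_s$ samples $u_0$ only on $B_{R/s}$, and if $u_0$ vanishes near the origin then $u_s\equiv$ const there; more carefully one checks $\partial^\beta u_s(z) = s^{-|\beta|}(\partial^\beta u_0)(z/s)\to 0$ uniformly on $B_R$ since $\partial^\beta u_0$ is bounded. One must also note that $e^{-2u_0}g_0$ is a genuine element of $\mathcal R^k_{\ge 0}(\mathbb R^2)$ — it is smooth, complete (Lemma~\ref{lem: complete > and <}, as $\alpha(u_0)=0$), and nonnegatively curved (subharmonicity of $u_0$) — and that it is not flat, so its class differs from that of $g_0$, making the statement nontrivial. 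Continuity of the quotient map then delivers the conclusion.
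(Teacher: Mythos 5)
Your construction fails at its foundation: the function $u_0$ you posit --- smooth, subharmonic on all of $\mathbb R^2$, equal to a constant outside a compact set $D$, and with $\triangle u_0>0$ somewhere on $D$ --- does not exist. If $u_0\equiv c$ outside $D$, then applying the maximum principle on a large disk whose boundary lies outside $D$ gives $u_0\le c$ everywhere, so $u_0$ attains its supremum at interior points and the strong maximum principle forces $u_0\equiv c$. Equivalently: you correctly compute $\alpha(u_0)=0$, but the paper records (citing Hayman--Kennedy, Theorem 2.14) that $\alpha(u)=0$ if and only if $u$ is constant, which contradicts $\triangle u_0>0$ somewhere. There is no non-flat complete nonnegatively curved metric conformal to $g_0$ whose conformal factor is compactly supported modulo constants; this is precisely why the paper's proof works extrinsically with a capped cone (a surface of revolution of a convex $f$ with $f\equiv 2$ near $0$ and $f(x)=x$ for $x\ge 3$), whose curvature is compactly supported but whose conformal factor grows like a multiple of $\log|z|$.

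There is a second, independent error in the mechanism. Even granting some non-constant $u_0\in S_1$, the identity $e^{-2u_0(z/s)}g_0=\delta_s^*\bigl(e^{-2u_0}g_0\bigr)$ for $\delta_s(z)=z/s$ is false: pulling back rescales the Euclidean metric, $\delta_s^*\bigl(e^{-2u_0}g_0\bigr)=e^{-2u_0(z/s)}s^{-2}g_0$, so the metric actually isometric to $e^{-2u_0}g_0$ is $e^{-2(u_0(z/s)+\log s)}g_0$, which degenerates rather than converging to $g_0$. Equivalently, your $e^{-2u_s}g_0$ is isometric to the homothetic rescaling $s^2e^{-2u_0}g_0$, and homothetic metrics need not be isometric: the curvature of $e^{-2u_s}g_0$ equals $s^{-2}e^{2u_s}\,(\triangle u_0)(z/s)$, whose maximum tends to $0$, so these metrics are pairwise non-isometric for generic $u_0$ and do not represent a single point of the moduli space. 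The same oversight invalidates your opening ``rescaling observation'' that $cg$ is isometric to $g$: one has $\psi^*(e^{-2u}g_0)=c\,e^{-2u(\sqrt{c}\,z)}g_0$ for $\psi(z)=\sqrt{c}\,z$, and this equals $c\,e^{-2u}g_0$ only when $u$ is dilation-invariant. The paper sidesteps both issues by noting that the capped cone, being a Euclidean cone outside a compact set, contains for every $r$ a metric $r$-ball isometric to the Euclidean $r$-ball; extending these isometries to diffeomorphisms $\varphi_r$ of $\mathbb R^2$ yields metrics $\varphi_r^*g$, all genuinely isometric to the fixed non-flat $g$, that agree with $g_0$ on $B_r$ and hence converge to $g_0$ uniformly on compact sets.
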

\begin{proof}
Let $f\co [0,\infty)\to [0,\infty)$ be a convex smooth 
function with $f(x)=2$ for $x\in [0,1]$ and $f(x)=x$ 
if $x\ge 3$.  
The surface of revolution in $\mathbb R^3$ obtained by rotating 
the curve $x\to (x, 0, f(x))$ about the {$z$-axis} defines a complete metric
$g$ on $\mathbb R^2$ of nonnegative curvature.
For each $r$ there is a metric $r$-ball in
$(\mathbb R^2, g)$ that is isometric to the Euclidean $r$-ball $B_r$
about the origin in $(\mathbb R^2, g_0)$. Extending the isometry
to a self-diffeomorphism $\varphi_r$ of $\mathbb R^2$ gives 
$\varphi_r^*g\vert_{B_r}=g_0\vert_{B_r}$
so  $\varphi_r^*g$ converge to $g_0$ uniformly on compact subsets 
as $r\to \infty$.
\end{proof}

\section{Beltrami equation: dependence of solutions on the dilatation}

\begin{thm} 
\label{thm: pi k homeo}
Let $k$ be a nonnegative integer or $k=\infty$ and $\g\in (0,1)$.
If $S_1$ is given the $C^{k+\g}$ topology and $\mathrm{Diff}^+_{0,1}(\mathbb C)$ 
is given the $C^{k+1+\g}$ topology, then the map $\Pi$ of \textup{(\ref{form: bijection})}
is a homeomorphism.
\end{thm}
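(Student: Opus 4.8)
The plan is to exploit the representation of a conformal metric through the Beltrami equation, as outlined in the introduction. Given $(u,\varphi)\in S_1\times\mathrm{Diff}^+_{0,1}(\mathbb C)$, write $g=\varphi^*e^{-2u}g_0$ in the form $g=\lambda|dz+\mu\,d\bar z|^2$ with $\lambda>0$, $|\mu|<1$ smooth; the pair $(\lambda,\mu)$ is recovered algebraically from the coefficients $E,F,G$ of $g$, so $g\mapsto(\lambda,\mu)$ is a $C^k$-to-$C^k$ homeomorphism onto its image. The diffeomorphism $\varphi$ is characterized (given the normalization fixing $0,1$ and the orientation) as the solution of the Beltrami equation $\varphi_{\bar z}=\mu\varphi_z$ with Beltrami coefficient $\mu$, and then $-2u=\bigl(\log(\lambda|\varphi_z|^{-2})\bigr)\circ\varphi^{-1}$, exactly as in the computation $\lambda|dz+\mu d\bar z|^2=\lambda|\varphi_z|^{-2}\varphi^*g_0$. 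Since $\Pi$ is already known to be a continuous bijection (it is continuous in the $C^\infty$ topologies, hence in any $C^k$ topology on the target), the whole content is continuity of $\Pi^{-1}$, i.e. of the two assignments $g\mapsto\varphi$ and $g\mapsto u$.

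First I would handle $g\mapsto\varphi$. Fix $r>1$ and restrict attention to $B_r$. Extend $\mu|_{B_r}$ to a compactly supported smooth $\hat\mu$ on $\mathbb C$ with $\sup|\hat\mu|<1$; the global normalized solution $w^{\hat\mu}\in\mathrm{Diff}^+_{0,1}(\mathbb C)$ of $f_{\bar z}=\hat\mu f_z$ agrees with $\varphi$ on $B_r$ by uniqueness of the normalized solution on $B_r$ (Morrey's measurable Riemann mapping theorem, together with smooth regularity). The key analytic input is the smooth dependence of $w^{\hat\mu}$ on $\hat\mu$: this is where I would invoke the regularity theory for the Beltrami equation — the Cauchy transform / Ahlfors–Bers machinery shows $w^{\hat\mu}$ depends holomorphically on $\hat\mu$ in the relevant Banach spaces, and elliptic bootstrapping upgrades $C^k$ control of $\hat\mu$ to $C^{k+1}$ control of $w^{\hat\mu}$ (one derivative is gained because $\varphi$ differentiated once solves an equation with coefficients one order less regular than $\varphi$ itself — more precisely $\varphi_z$ is, up to the Cauchy transform, governed by $\hat\mu$). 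Restricting back to $B_r$ and letting $r\to\infty$ gives continuity of $g\mapsto\varphi$ from the $C^k$ topology to the $C^{k+1}$ topology. This step — locating and correctly citing a statement that gives precisely the $C^k\Rightarrow C^{k+1}$ gain in dependence (rather than just continuity of $\mu\mapsto w^\mu$ in a fixed topology, which appears in Earle–Schatz) — is the main obstacle, and I expect to need the fine regularity results in Astala–Iwaniec–Martin's book, or a direct Schauder-estimate argument on the equation $\partial_{\bar z}(w-\mathrm{id})=\hat\mu\,w_z$ via the solid Cauchy transform.

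Once $g\mapsto\varphi$ is continuous $C^k\to C^{k+1}$, the remaining assignment $g\mapsto u$ is a formal consequence: $u=-\tfrac12\bigl(\log(\lambda|\varphi_z|^{-2})\bigr)\circ\varphi^{-1}$ is built by composition, inversion, logarithm, and one differentiation ($\varphi\mapsto\varphi_z$) out of maps already known to depend continuously on $g$ in the appropriate topologies — and since composition $(\psi,h)\mapsto\psi\circ h$ and inversion $h\mapsto h^{-1}$ are continuous in the $C^k$ compact-open topology (with the usual one-degree considerations: $\varphi\in C^{k+1}$ so $\varphi_z\in C^k$, $\varphi^{-1}\in C^{k+1}$, and $u\in C^k$), the composite $g\mapsto u$ lands in the $C^k$ topology on $S_1$. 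Putting the two together, $\Pi^{-1}$ is continuous into $S_1\times\mathrm{Diff}^+_{0,1}(\mathbb C)$ with the stated topologies, so $\Pi$ is a homeomorphism. Finally I would note that the case $k=\infty$ is obtained by taking the inverse limit over all finite $k$, which also recovers Theorem~\ref{intro-thm: space of metrics homeo l2} via the identification of $S_1\times\mathrm{Diff}^+_{0,1}(\mathbb C)$ with $\ell^2$.
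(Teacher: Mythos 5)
Your overall architecture --- recover $(\lambda,\mu)$ from $g$, characterize $\varphi$ as the normalized solution of $\varphi_{\bar z}=\mu\varphi_z$, then read off $u$ from $\lambda$ and $\varphi_z$ --- is the same as the paper's, and your treatment of $g\mapsto u$ once $g\mapsto\varphi$ is settled is fine. But the localization step for $g\mapsto\varphi$ contains a genuine gap. You assert that if $\hat\mu$ is a compactly supported extension of $\mu\vert_{B_r}$ with $\sup|\hat\mu|<1$, then the normalized global solution $w^{\hat\mu}$ agrees with $\varphi$ on $B_r$ ``by uniqueness.'' This is false: on $B_r$ both maps have dilatation $\mu$, so $\varphi\circ (w^{\hat\mu})^{-1}$ is conformal on $w^{\hat\mu}(B_r)$, but it carries $w^{\hat\mu}(B_r)$ onto $\varphi(B_r)$, and these two image domains are in general different; uniqueness for the Beltrami equation on a subdomain holds only up to post-composition with a conformal map of the image. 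The unknown conformal correction depends on the behavior of $\mu$ outside $B_r$ (where $\sup|\mu|$ need not be bounded away from $1$, and where two metrics that are $C^k$-close on compacta need not be close at all), so it is not controlled by the local data, and letting $r\to\infty$ does not repair this. This is exactly the difficulty the paper's proof is organized around: it composes $\tilde\varphi$ with an ambient diffeomorphism $h$, equal to the identity on a large ball containing $\tilde\varphi(K)$, chosen so that $h\circ\tilde\varphi$ and $\varphi$ carry $\bar B_r$ onto the \emph{same} domain $\varphi(\bar B_r)$ and agree at three boundary points; only then does the Earle--Schatz Continuity Theorem (stated for maps between fixed Jordan domains normalized at three boundary points, with dilatation bounded away from $1$) apply.

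A secondary point: you describe locating a $C^k\Rightarrow C^{k+1}$ dependence statement as ``the main obstacle'' and suggest Earle--Schatz only give continuity of $\mu\mapsto w^\mu$ in a fixed topology. In fact their Continuity Theorem is precisely the statement that varying the dilatation in the $C^k$ topology varies the normalized solution in the $C^{k+1}$ topology (for the upper half plane, transported to arbitrary domains by conformal invariance of the dilatation), and this is what the paper cites; no appeal to Astala--Iwaniec--Martin or a separate Schauder argument is needed. The real obstacle is the one above: arranging the hypotheses of that theorem (fixed target domain, three matched boundary points, dilatation bounded away from $1$) in a situation where none of them holds a priori.
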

\begin{proof}
By Theorem~\ref{thm: char of completeness} and Lemma~\ref{lem: conformal unique}
the map $\Pi(u,\varphi)=\varphi^*e^{-2u}g_0$ is a bijection. 
If $\varphi\in \mathrm{Diff}^+_{0,1}(\mathbb C)$ varies
in the $C^{k+1+\g}$ topology, the its differential varies in the $C^{k+\g}$ topology,
which implies continuity of $\Pi$. It remains to show that $\Pi^{-1}$
is continuous.

Write $g=\varphi^*e^{f}g_0=e^{f\circ \varphi}\varphi^*g_0$ with 
$\varphi$ is an orientation-preserving diffeomorphism of $\mathbb C$
that fixes $0$ and $1$.
The Jacobian of $\varphi$
equals $|\varphi_z|^2-|\varphi_{\bar z}|^2$, and since $\varphi$ is orientation-preserving,
we get 
$\frac{|\varphi_{\bar z}|}{|\varphi_z|}<1$.
Computing
\[
\frac{\varphi^*g_0}{|\varphi_z|^2}=
\frac{|d\varphi|^2}{|\varphi_z|^2}=
\left|dz+\frac{\varphi_{\bar z}}{\varphi_z} d\bar z\right|^2.
\] 
gives $g=e^{f\circ \varphi}|\varphi_z|^2 \left|dz+\frac{\varphi_{\bar z}}{\varphi_z} d\bar z\right|^2$.
Also we can write $g=Edx^2+2Fdxdy+Gdy^2$
as $\l |dz+\mu d\bar z|^2$, where \[
\l=\frac{1}{4}(E+G+2\sqrt{EG-F^2})\quad\text{and}\quad\mu=\frac{E-G+2iF}{4\l}.\]
Positive definiteness of $g$ easily implies $|\mu|<1$ and $\l>0$.
Note that $\mu$ and $\l$ depend smoothly on $g$.
 
Comparing the two descriptions of $g$ we see that $\varphi_{\bar z}=\mu \varphi_z$, that is,
$\varphi$ solves the Beltrami equation with dilatation $\mu$. Furthermore, $\l=e^{f\circ \varphi}|\varphi_z|^2$
so that $f=\log (\l |\varphi_z|^{-2})\circ \varphi^{-1}$.

Since $\mathcal R^{k+\g}_{\ge 0}(\mathbb R^2)$ is metrizable, the continuity of $\Pi^{-1}$
would follow once we show that for any sequence of metrics 
$g_l=\varphi_l^*e^{f_l} g_0=\l_l|dz+\mu_l d\bar z|^2$,
$\varphi_l\in\mathrm{Diff}^{+}_{0,1}(\mathbb C)$
that converges to $g$ uniformly on compact subsets in the $C^{k+\g}$ topology, the
maps $\varphi_l$, $f_l$ converge to $\varphi$, $f$ in the $C^{k+1+\g}$, $C^{k+\g}$
topology, respectively.
A key ingredient is the smooth dependence of $\varphi$ on $\mu$
established by Earle-Schatz in~\cite{EarSch}. 

To state their result let $U$, $U^\prime$ be domains
in $S^2$ whose boundaries are embedded circles, and let
$a_1$, $a_2$, $a_3$ and $a_1^\prime$, $a_2^\prime$, $a_3^\prime$ be two 
triples of distinct points on $\d U$ and $\d U^\prime$ respectively.
Recall that given a $C^\infty$ function $\nu\co U\to\mathbb C$
with $|\nu|\le k<1$ for some constant $k$, there is a 
unique homeomorphism $w^\nu\co \bar U\to\bar U^\prime$
that restricts to a diffeomorphism $U\to U^\prime$, maps each $a_k$ to $a_k^\prime$, 
and solves the Beltrami equation with dilatation $\nu$, 
see e.g.~\cite[p.183, 194]{LehVir} for existence and uniqueness, 
and~\cite[Theorem 2.2 in Section 4 of Chapter 2]{Vek-book} for regularity.  

The Continuity Theorem of Earle-Schatz~\cite[p. 181]{EarSch} 
states that varying $\nu$ in the $C^{k+\g}$ topology results in varying 
$w^\nu$ in $C^{k+1+\g}$ topology. Strictly speaking,
Earle-Schatz assume that $U$, $U^\prime$ equal the upper half plane, and
both triples of points equal $0$, $1$, $\infty$, but 
the conformal invariance of 
the Beltrami dilatation, see e.g.~\cite[formulas (7)-(8) on p.9]{Ahl-book},
together with the  Riemann mapping theorem  
give the same conclusion for any $U$, $U^\prime$ 
as above.

The Continuity Theorem does not immediately apply in
our setting, where $U=\mathbb C=U^\prime$ and $|\nu|$ is not
bounded way from $1$. Instead we use the
theorem locally, on an arbitrary disk $B_t=\{z\in\mathbb C\,:\, |z|<t\}$,
but then the difficulty is that the domain $\varphi(B_t)$ 
may change as the diffeomorphism $\varphi$ varies with $\mu$.
Below we resolve the issue by adjusting $\varphi(B_t)$ via an ambient diffeomorphism
that is the identity on a given compact set. Exhausting $\mathbb C$
by such compact sets yields the smooth dependence of $\varphi$ on $\mu$.

Let $K$ be a compact subset of $\mathbb C$. Let 
$\tilde g=\tilde\varphi^*e^{\tilde f}g_0=\tilde\l|dz+\tilde\mu d\bar z|^2$
be a metric that is $C^{k+\g}$ close
to $g=\varphi^*e^{f}g_0=\l|dz+\mu d\bar z|^2$ over $K$, where 
$\tilde\varphi, \varphi\in \mathrm{Diff}^{+}_{0,1}(\mathbb C)$. 
Choose $s$ with $\tilde\varphi(K)\subset B_{s}$.
The domains $\varphi(B_r)$, $r>0$ exhaust $\mathbb C$, and so do the domains
$\tilde\varphi(B_r)$, which allows us to find $r$
with $\bar B_{s}\subset \varphi(B_r)\cap \tilde\varphi(B_r)$.

It is easy to construct an orientation-preserving diffeomorphism $h$ of $\mathbb C$
that maps $\tilde\varphi(\bar B_r)$ onto $\varphi(\bar B_r)$, equals the identity
on $B_{s}$, and has the property that $h\circ\tilde\varphi$ and $\varphi$
agree at the points $-r$, $ir$, $r$ of $\d B_r$.
(Indeed, $\tilde\varphi(\d B_r)$, $\varphi(\d B_r)$ are homotopic smooth simple closed curves
in the open annulus $\mathbb C-\bar{B}_{s}$, and hence they can be moved to each 
other by a compactly supported ambient isotopy of the annulus, 
see e.g.~\cite[Propositions 1.10-1.11]{FarMar}. The isotopy starts at the identity and ends
at a diffeomorphism that has the desired property except it needs to be
adjusted at three points.
The identity component of $\mathrm{Diff}(S^1)$ acts transitively on the set of
triples of distinct points of $S^1$, e.g.
if $S^1$ is identified with the boundary 
of the upper half plane, then 
the map $x\to \frac{(x-a)(c-b)}{(x-b)(c-a)}$
takes $a$, $b$, $c$ to $0$, $\infty$, $1$, respectively, and preserves an orientation, and hence
is isotopic to the identity of $S^1$. 
So given two triples of points in $S^1\times \{0\}$
there is a compactly supported orientation-preserving diffeomorphism of $S^1\times \mathbb R$ that 
takes one triple to the other one. Here we identify 
$S^1\times \mathbb R$, $S^1\times \{0\}$ with $\mathbb C-\bar{B}_{s}$, 
$\varphi(\d B_r)$, respectively. 
Composing the two diffeomorphisms, and extending the result 
by the identity on $\bar B_s$ yields the desired $h$).

Since $g\vert_K$, $\tilde g\vert_K$ are $C^{k+\g}$ close, so are the dilatations
of $\varphi\vert_K$, $\tilde\varphi\vert_K=h\circ\tilde\varphi\vert_K$.
Thus $\varphi\vert_{B_r}$, $h\circ\tilde\varphi\vert_{B_r}$ are 
diffeomorphisms of $B_r$ onto $\varphi({B_r})$ whose dilatations
are $C^{k+\g}$ close on $K$. The absolute values of the dilatations
are less than $1$ (as the diffeomorphisms are orientation-preserving), and
hence are bounded away from $1$ by compactness of $\bar B_r$.
Now the Continuity Theorem implies that
$\varphi\vert_{B_r}$, $h\circ\tilde\varphi\vert_{B_r}$
are $C^{k+1+\g}$ close over $K$. 
It follows that $\l|\varphi_z|^{-2}$, $\tilde\l|\tilde\varphi_z|^{-2}$ are $C^{k+\g}$ close over $K$.

Thus if a sequence of metrics $g_l$ converges to $g$ uniformly 
on compact subsets in the $C^{k+\g}$ topology, then 
the corresponding diffeomorphisms $\varphi_l$ converge to $\varphi$ uniformly 
on compact subsets in the $C^{k+1+\g}$ topology. Since $\mathrm{Diff}^{+}_{0,1}(\mathbb C)$
with the $C^{k+1+\g}$ topology is a topological group we also have
the $C^{k+1+\g}$ convergence of $\varphi_l^{-1}$ to $\varphi^{-1}$. This implies that 
$\log (\l_l |(\varphi_l)_z|^{-2})\circ \varphi_l^{-1}$ converges to 
$\log (\l |\varphi_z|^{-2})\circ \varphi^{-1}$
in the $C^{k+\g}$ topology, which completes the proof.
\end{proof}

\section{Metric deformation with obstacles}
\label{sec: applications}

\begin{proof}[Proof of Theorem~\ref{thm: space of metrics countable}]
As was explained in the introduction,
$S_1\times\mathrm{Diff}^+_{0,1}(\mathbb R^2)$ is homeomorphic to $\ell^2$,
and since $\Pi^{-1}(K)$ is a countable union of compact sets,
its complement is homeomorphic to $\ell^2$, which contains the Hilbert cube
and hence an embedded copy of $X$. Any two pair of points of $\ell^2$ can
be moved to each other by an affine self-homeomorphism of $\ell^2$, so postcomposing 
with such a homeomorphism one can ensure that the embedding $X\to\ell^2$ maps $x_1, x_2$ to 
$\Pi^{-1}(g_1)$, $\Pi^{-1}(g_2)$, respectively. Since $X$ sits in
an embedded copy of the Hilbert cube (a compact set), and since
$\mathcal R^{k}_{\ge 0}(\mathbb R^2)$ is Hausdorff, the restriction
of $\Pi$ to the embedded copy of $X$ is a homeomorphism onto
its image, which has desired properties.
\end{proof}

\begin{proof}[Proof of Theorem~\ref{thm: space of metrics finite dim}]
Let $S$ be a finite dimensional subspace of $\mathcal R^{k}_{\ge 0}(\mathbb R^2)$.
Fix two points $g_1$, $g_2$ in the complement of $S$. 
Theorem~\ref{thm: space of metrics countable} implies 
$g_1$, $g_2$ lie in a subspace $X$ of
$\mathcal R^{k}_{\ge 0}(\mathbb R^2)$ that
is homeomorphic to $\mathbb R^n$ with $n\ge\dim(S)+2$.
Since $S\cap X$ has dimension $\le\dim(S)$~\cite[Theorem 1.1.2]{Eng-book}, 
its codimension in $X\cong\mathbb R^n$ is $\ge 2$, hence the points
$g_1$, $g_2$ lie is a continuum in $X$ that is disjoint from 
$S$~\cite[Theorem 1.8.19]{Eng-book}.

If $S$ is closed one can say more: Suppose that $X$ is homeomorphic to $S^n$
with $n\ge\dim(S)+2$, 
so $S\cap X$ is a closed subset of $X$ 
of codimension $\ge 2$. By the cohomological characterization of
dimension, see~\cite[p.95]{Eng-book}, the space $S\cap X$ has trivial {\v C}ech
cohomology in dimensions $>n-2$, hence 
by the Alexander duality $X\smallsetminus S$
is path-connected, giving a path in $X\smallsetminus S$ joining
$g_1, g_2$. 
\end{proof}

\begin{proof}[Proof of Theorem~\ref{thm: mod disconnected}]
Let $q\co S_1\to \mathcal M^{k}_{\ge 0}(\mathbb R^2)$ denote the continuous
surjection sending $u$ to the isometry class of $e^{-2u}g_0$.

If $S$ is countable, it suffices to show that every fiber of $q$
is the union of countably many compact sets because then
the complement of a countable subset
in $\mathcal M^{k}_{\ge 0}(\mathbb R^2)$ is the image of $S_1$ with a countable collection
of compact subsets removed, which is homeomorphic to $\ell^2$~\cite[Theorem V.6.4]{BP-book},
and of course the continuous image of $\ell^2$ is path-connected.

A function $v\in S_1$ lies in the fiber over the isometry class of $e^{-2u}g_0$ 
if and only if $e^{-2v}g_0=\psi^*e^{-2u}g_0=e^{-2u\circ \psi}\psi^* g_0$
for some $\psi\in\mathrm{Diff}(\mathbb R^2)$. Note that
$\psi$ necessarily lies in $\mathrm{Conf}(g_0)$,
the group of conformal automorphisms of $g_0$, i.e.
either $\psi$ or $r\psi$ equals $z\to az+b$ for some $a,b\in \mathbb C$, 
where $a\neq 0$ and $r(z)=\bar z$.
Since $\psi^*g_0=|a|^2g_0$, we conclude that $v=u\circ\psi-\log|a|$. 
In summary, $v, u\in S_1$ satisfy $q(u)=q(v)$ if and only if
$v=u\circ\psi-\log|a|$ where either $\psi$ or $r\psi$ equals $z\to az+b$
with $a\neq 0$. 
Setting $\psi^{-1}[u]:=u\circ\psi-\log|a|$ 
defines a $\mathrm{Conf}(g_0)$-action on $C^{\infty}(\mathbb R^2)$
whose orbit through $u\in S_1$ contain the $q$-preimage of $q(u)$.

Now suppose that $S$ is closed, metrizable and finite dimensional.
Let $\hat S$ be the $q$-preimage of $S$. 
Fix two points $g_1$, $g_2$ 
in the complement of $S$, which are $q$-images of $u_1, u_2\in S_1\setminus \hat S$, respectively.
Since $S_1$ is homeomorphic to $\ell^2$, 
the proof of Theorem~\ref{thm: space of metrics countable} 
shows that $u_1, u_2$ lie in an embedded copy
$\hat Q$ of the Hilbert cube.
It is enough to show that $\hat Q\cap \hat S$ is finite dimensional
in which case $\hat Q\smallsetminus \hat S$ is path-connected because
the complement to 
any finite dimensional closed subset of the Hilbert cube is acyclic~\cite[Lemma 2.1]{Kro}.
Since $S$ is closed, $\hat Q\cap\hat S$ is compact, so
the restriction of $q$ to $\hat Q\cap\hat S$
is a continuous surjection $\hat q\co \hat Q\cap\hat S\to q(\hat Q)\cap S$
of compact separable metrizable spaces,
and in particular a closed map, which is essential for what follows. 

Since the target of $\hat q$ lies in $S$, it is finite dimensional.
The map $\hat q$ is closed, so finite-dimensionality of the domain of $\hat q$ would follow from
a uniform upper bound on the dimension of the fibers of $\hat q$~\cite[Theorem 1.12.4]{Eng-book}.
Each fiber lies in the image of the orbit 
map $\mathrm{Conf}(g_0)\to C^\infty(\mathbb R^2)$ described above.
If $G:=\mathrm{Conf}(g_0)$ and $G_u$ is the isotropy subgroup of $u$ in $G$,
then the $G$-orbit of $u$ is the image of a one-to-one continuous map $o\co G/G_u\to C^\infty(\mathbb R^2)$.
Since $G/G_u$ is a manifold of dimension $l\le \dim G$, 
it is the union of countably many $l$-dimensional
compact domains. Restricting $o$ to each such domain
is a continuous one-to-one map from a compact space to a Hausdorff space,
which is a homeomorphism, and hence
the dimension of its image is $l$.
By the sum theorem for the dimension of a countable union of
closed subsets~\cite[Theorem 1.5.4]{Eng-book} the image of $o$
has dimension $l$. 
\end{proof}

\begin{rmk}
\label{rmk: flat metrics}
Theorems~\ref{thm: space of metrics countable}--\ref{thm: mod disconnected}
yield a deformation between any two given metrics $g_1, g_2$ that runs in 
a separable metrizable space, a continuum, or a path, and we now 
show that this deformation can be arranged to bypass any given set of complete
flat metrics $\mathcal F$. We do so in the setting of Theorem~\ref{thm: space of metrics countable}; 
the other two proofs are similar. Set 
$P_\a:=S_\a\times \mathrm{Diff}^+_{0,1}(\mathbb R^2)$, which we
identify with $\ell^2$. 
Flat metrics are parametrized by $P_0$ which is a closed linear
subset of infinite codimension in $P_1$.
If $P_\a^\prime:=P_\a\smallsetminus\{\Pi^{-1}(g_1), \Pi^{-1}(g_2)\}$,
then $P_0^\prime$ has property $Z$ in $P_1^\prime$, 
see~\cite[Lemma 1]{AHW}, so that~\cite[Theorem 3]{AHW} implies that
$P_1\smallsetminus P_0^\prime$ is homeomorphic to $P_1$, after which the proof is finished
as in Theorem~\ref{thm: space of metrics countable}.
\end{rmk}

\small
\bibliographystyle{plain}
\bibliography{modr2}

\def\cprime{$'$}
\providecommand{\bysame}{\leavevmode\hbox to3em{\hrulefill}\thinspace}
\providecommand{\MR}{\relax\ifhmode\unskip\space\fi MR }
\providecommand{\MRhref}[2]{%
  \href{http://www.ams.org/mathscinet-getitem?mr=#1}{#2}
}
\providecommand{\href}[2]{#2}
\begin{thebibliography}{AHW69}

\bibitem[Ahl06]{Ahl-book}
L.~V. Ahlfors, \emph{Lectures on quasiconformal mappings}, second ed.,
  University Lecture Series, vol.~38, American Mathematical Society,
  Providence, RI, 2006, With supplemental chapters by C. J. Earle, I. Kra, M.
  Shishikura and J. H. Hubbard.

\bibitem[AHW69]{AHW}
R.~D. Anderson, D.~W. Henderson, and J.~E. West, \emph{Negligible subsets of
  infinite-dimensional manifolds}, Compositio Math. \textbf{21} (1969),
  143--150.

\bibitem[AIM09]{AIM}
K.~Astala, T.~Iwaniec, and G.~Martin, \emph{Elliptic partial differential
  equations and quasiconformal mappings in the plane}, Princeton Mathematical
  Series, vol.~48, Princeton University Press, Princeton, NJ, 2009.

\bibitem[And66]{And-l2-lines}
R.~D. Anderson, \emph{Hilbert space is homeomorphic to the countable infinite
  product of lines}, Bull. Amer. Math. Soc. \textbf{72} (1966), 515--519.

\bibitem[And67]{And-defic}
\bysame, \emph{On topological infinite deficiency}, Michigan Math. J.
  \textbf{14} (1967), 365--383.

\bibitem[Aza13]{Aza}
D.~Azagra, \emph{Global and fine approximation of convex functions}, Proc.
  London Math. Soc. (2013), 1--26.

\bibitem[Bel12]{Bel-ober}
I.~Belegradek, \emph{The space of complete nonnegatively curved metrics on the
  plane}, Oberwolfach Reports \textbf{01} (2012), 18--19.

\bibitem[BF42]{BlaFia}
C.~Blanc and F.~Fiala, \emph{Le type d'une surface et sa courbure totale},
  Comment. Math. Helv. \textbf{14} (1941--42), 230--233.

\bibitem[BKS]{BKS-mod2}
I.~Belegradek, S.~Kwasik, and R.~Schultz, \emph{Codimension two souls and
  cancellation phenomena}, arXiv:0912.4874v1.

\bibitem[BKS11]{BKS-mod1}
\bysame, \emph{Moduli spaces of nonnegative sectional curvature and non-unique
  souls}, J. Differential Geom. \textbf{89} (2011), no.~1, 49--85.

\bibitem[BP75]{BP-book}
C.~Bessaga and A.~Pelczy{\'n}ski, \emph{Selected topics in infinite-dimensional
  topology}, PWN---Polish Scientific Publishers, Warsaw, 1975, Monografie
  Matematyczne, Tom 58. [Mathematical Monographs, Vol. 58].

\bibitem[DT81]{DobTor}
T.~Dobrowolski and H.~Toru{\'n}czyk, \emph{Separable complete {ANR}s admitting
  a group structure are {H}ilbert manifolds}, Topology Appl. \textbf{12}
  (1981), no.~3, 229--235.

\bibitem[Dug66]{Dug}
J.~Dugundji, \emph{Topology}, Allyn and Bacon Inc., Boston, Mass., 1966.

\bibitem[Ebi70]{Ebi}
D.~G. Ebin, \emph{The manifold of {R}iemannian metrics}, Global {A}nalysis
  ({P}roc. {S}ympos. {P}ure {M}ath., {V}ol. {XV}, {B}erkeley, {C}alif., 1968),
  Amer. Math. Soc., Providence, R.I., 1970, pp.~11--40. \MR{0267604 (42
  \#2506)}

\bibitem[Eng78]{Eng-book}
R.~Engelking, \emph{Dimension theory}, North-Holland Publishing Co., Amsterdam,
  1978, Translated from the Polish and revised by the author, North-Holland
  Mathematical Library, 19.

\bibitem[ES70]{EarSch}
C.~J. Earle and A.~Schatz, \emph{Teichm\"uller theory for surfaces with
  boundary}, J. Differential Geometry \textbf{4} (1970), 169--185.

\bibitem[FM78]{FegMil}
H.~D. Fegan and R.~S. Millman, \emph{Quadrants of {R}iemannian metrics},
  Michigan Math. J. \textbf{25} (1978), no.~1, 3--7.

\bibitem[FM12]{FarMar}
B.~Farb and D.~Margalit, \emph{A primer on mapping class groups}, Princeton
  Mathematical Series, vol.~49, Princeton University Press, Princeton, NJ,
  2012.

\bibitem[FO09]{FarOnt-ann09}
F.~T. Farrell and P.~Ontaneda, \emph{The {T}eichm\"uller space of pinched
  negatively curved metrics on a hyperbolic manifold is not contractible}, Ann.
  of Math. (2) \textbf{170} (2009), no.~1, 45--65.

\bibitem[FO10a]{FarOnt-jtop10}
\bysame, \emph{The moduli space of negatively curved metrics of a hyperbolic
  manifold}, J. Topol. \textbf{3} (2010), no.~3, 561--577.

\bibitem[FO10b]{FarOnt-jdg10}
\bysame, \emph{On the topology of the space of negatively curved metrics}, J.
  Differential Geom. \textbf{86} (2010), no.~2, 273--301.

\bibitem[FO10c]{FarOnt-gafa10}
T.~Farrell and P.~Ontaneda, \emph{Teichm\"uller spaces and negatively curved
  fiber bundles}, Geom. Funct. Anal. \textbf{20} (2010), no.~6, 1397--1430.

\bibitem[Gri99]{Gri-surv}
A.~Grigor'yan, \emph{Analytic and geometric background of recurrence and
  non-explosion of the {B}rownian motion on {R}iemannian manifolds}, Bull.
  Amer. Math. Soc. (N.S.) \textbf{36} (1999), no.~2, 135--249.

\bibitem[Hay60]{Hay-slow}
W.~K. Hayman, \emph{Slowly growing integral and subharmonic functions},
  Comment. Math. Helv. \textbf{34} (1960), 75--84.

\bibitem[Hir94]{Hir}
M.~W. Hirsch, \emph{Differential topology}, Graduate Texts in Mathematics,
  vol.~33, Springer-Verlag, New York, 1994, Corrected reprint of the 1976
  original.

\bibitem[HK76]{HayKen-vol1}
W.~K. Hayman and P.~B. Kennedy, \emph{Subharmonic functions. {V}ol. {I}},
  Academic Press, London, 1976, London Mathematical Society Monographs, No. 9.

\bibitem[Hub57]{Hub-1957}
A.~Huber, \emph{On subharmonic functions and differential geometry in the
  large}, Comment. Math. Helv. \textbf{32} (1957), 13--72.

\bibitem[Kap60]{Kap}
W.~Kaplan, \emph{Paths of rapid growth of entire functions}, Comment. Math.
  Helv. \textbf{34} (1960), 71--74.

\bibitem[KPT05]{KPT-mod}
V.~Kapovitch, A.~Petrunin, and W.~Tuschmann, \emph{Non-negative pinching,
  moduli spaces and bundles with infinitely many souls}, J. Differential Geom.
  \textbf{71} (2005), no.~3, 365--383.

\bibitem[Kro74]{Kro}
N.~Kroonenberg, \emph{Characterization of finite-dimensional {$Z$}-sets}, Proc.
  Amer. Math. Soc. \textbf{43} (1974), 421--427.

\bibitem[KS93]{KS-mod}
M.~Kreck and S.~Stolz, \emph{Nonconnected moduli spaces of positive sectional
  curvature metrics}, J. Amer. Math. Soc. \textbf{6} (1993), no.~4, 825--850.

\bibitem[LRW84]{LRW}
J.~Lewis, J.~Rossi, and A.~Weitsman, \emph{On the growth of subharmonic
  functions along paths}, Ark. Mat. \textbf{22} (1984), no.~1, 109--119.

\bibitem[LV73]{LehVir}
O.~Lehto and K.~I. Virtanen, \emph{Quasiconformal mappings in the plane},
  second ed., Springer-Verlag, New York-Heidelberg, 1973, Translated from the
  German by K. W. Lucas, Die Grundlehren der mathematischen Wissenschaften,
  Band 126.

\bibitem[Mar12]{Mar-3sph-ric-fl}
F.~C. Marques, \emph{Deforming three-manifolds with positive scalar curvature},
  Ann. of Math. (2) \textbf{176} (2012), no.~2, 815--863.

\bibitem[Mor38]{Mor}
C.~B. Morrey, Jr., \emph{On the solutions of quasi-linear elliptic partial
  differential equations}, Trans. Amer. Math. Soc. \textbf{43} (1938), no.~1,
  126--166.

\bibitem[NW00]{NabWei-ihes}
A.~Nabutovsky and S.~Weinberger, \emph{Variational problems for {R}iemannian
  functionals and arithmetic groups}, Inst. Hautes \'Etudes Sci. Publ. Math.
  (2000), no.~92, 5--62 (2001).

\bibitem[Ott]{Ott-non-homeo}
S.~Ottenburger, \emph{Simply and tangentially homotopy equivalent but
  non-homeomorphic homogeneous manifolds}, arXiv:1102.5708v2.

\bibitem[Ros07]{Ros-surv}
J.~Rosenberg, \emph{Manifolds of positive scalar curvature: a progress report},
  Surveys in differential geometry. {V}ol. {XI}, Surv. Differ. Geom., vol.~11,
  Int. Press, Somerville, MA, 2007, pp.~259--294.

\bibitem[RV73]{RobVar}
A.~W. Roberts and D.~E. Varberg, \emph{Convex functions}, Academic Press, New
  York-London, 1973, Pure and Applied Mathematics, Vol. 57.

\bibitem[Tr{\`e}67]{Tre-TVS-book}
F.~Tr{\`e}ves, \emph{Topological vector spaces, distributions and kernels},
  Academic Press, New York, 1967.

\bibitem[Vek62]{Vek-book}
I.~N. Vekua, \emph{Generalized analytic functions}, Pergamon Press,
  London-Paris-Frankfurt; Addison-Wesley Publishing Co., Inc., Reading, Mass.,
  1962.

\bibitem[Wra11]{Wra-ric-mod}
D.~J. Wraith, \emph{On the moduli space of positive {R}icci curvature metrics
  on homotopy spheres}, Geom. Topol. \textbf{15} (2011), no.~4, 1983--2015.

\bibitem[Yag]{Yag}
T.~Yagasaki, \emph{Homotopy types of diffeomorphism groups of noncompact
  2-manifolds}, arXiv:math/0109183v3.

\end{thebibliography}

\end{document}